\definecolor{hot}{RGB}{65,105,225}
\theoremstyle{plain}
\newtheorem{theorem}{Theorem}[section]
\newtheorem{proposition}[theorem]{Proposition}
\newtheorem{lm}[theorem]{Lemma}
\newtheorem{corollary}[theorem]{Corollary}
\newtheorem{thrm}[theorem]{Theorem}
\theoremstyle{definition}
\newtheorem{definition}[theorem]{Definition}
\newtheorem{remark}[theorem]{Remark}
\newtheorem{ex}[theorem]{Example}
\newtheorem*{ex*}{Example}
\numberwithin{equation}{section}
\def\be{\begin{equation}}
\def\ee{\end{equation}}
\def\bt{\begin{thrm}}
\def\et{\end{thrm}}
\def\bc{\begin{cor}}
\def\ec{\end{cor}}
\def\br{\begin{rmk}}
\def\er{\end{rmk}}
\def\bp{\begin{prop}}
\def\ep{\end{prop}}
\def\bl{\begin{lm}}
\def\el{\end{lm}}
\def\bex{\begin{ex}}
\def\eex{\end{ex}}
\def\bd{\begin{defn}}
\def\ed{\end{defn}}
\newcommand\sM{{\mathcal M}}
\newcommand\sW{{\mathcal W}}
\newcommand\sS{\mathscr{S}}
\newcommand\cW{\mathcal{W}}
\renewcommand\d{\mathrm{d}}
\def\bR{\mathbf{R}}
\DeclareMathOperator{\Jac}{Jac}
\DeclareMathOperator{\codim}{codim}              
\DeclareMathOperator{\reg}{reg}                  
\DeclareMathOperator{\Eu}{\mathrm{Eu}}
\DeclareMathOperator{\Sing}{Sing}   
\DeclareMathOperator{\mult}{mult}  
\newcommand{\lk}{{\mathbb C \rm{lk}}}
\DeclareMathOperator{\rank}{rank}
\DeclareMathOperator{\ord}{ord}
\def\bC{\mathbb{C}}
\def\cM{\mathcal{M}}
\def\bP{\mathbb{P}}
\def\cO{\mathcal{O}}
\def\lra{\longrightarrow}
\def\bZ{\mathbb{Z}}
\def\C{\mathbb{C}}
\title[Euclidean distance degree and limit points in a Morsification]{Euclidean distance degree and limit points in a Morsification}
\author{Lauren\c{t}iu Maxim}
\address{L. Maxim: Department of Mathematics,  University of Wisconsin-Madison,  480 Lincoln Drive, Madison WI 53706-1388, USA.}
\email {maxim@math.wisc.edu}
\author[M. Tib\u{a}r]{Mihai Tib\u{a}r}
\address{M. Tib\u{a}r: Universit\' e de  Lille, CNRS, UMR 8524 -- Laboratoire Paul Painlev\'e, F-59000 Lille, France}  
\email {mihai-marius.tibar@univ-lille.fr}
\keywords{Euclidean distance degree, local Euler obstruction function, vanishing cycles, polar curve.}
\subjclass[2010]{32S30, 14C17, 32S60, 57R20, 32S50}
\date{\today}
\begin{document}

\begin{abstract}  
Motivated by finding an effective way to compute the algebraic complexity of the nearest point problem for algebraic models,
we introduce an efficient method for detecting the limit points of the stratified Morse trajectories in a small perturbation of 
any polynomial function on a complex affine variety.  We compute the multiplicities of these limit points in terms of
vanishing cycles. In the case of functions with only isolated stratified singularities, we express the local multiplicities in terms of  polar intersection numbers. 
\end{abstract}

\maketitle

\section{Introduction}

The Euclidean distance degree (ED degree) \cite{DHOST} of an affine algebraic variety $X\subset \mathbb{C}^N$ is defined as the number of critical points of the squared Euclidean distance function $d_u(x) := \sum_{i=1}^N(x_i-u_i)^2$ on the smooth locus $X_{\reg}$ of $X$, for {\it generic} $u=(u_1,\dots,u_N)$. It is denoted by ${\rm EDdeg}(X)$. If $X$ is the complexification of a real algebraic model $X_{\bR}$, the ED degree of $X$ measures the algebraic complexity of the nearest point problem for $X_{\bR}$. 
The Euclidean distance degree was introduced in \cite{DHOST}, and has since been extensively studied in areas like computer vision, 
biology, 
chemical reaction networks, 
engineering, 
numerical algebraic geometry, 
data science, 
etc.

In \cite{MRW2018}, the authors gave a purely topological interpretation of the Euclidean distance degree of a complex affine variety, expressed in terms of the weighted Euler characteristic of MacPherson's {local Euler obstruction function} \cite{M74}. As a concrete application, they solved the  
{\it $n$-view conjecture} of \cite{DHOST}, which was motivated  by  the
{\it triangulation  problem} in computer vision \cite{SSN}, and was the main reason for the introduction of the Euclidean distance degree.


Typically, results on Euclidean distance degrees and nearest point problems have a hypothesis requiring genericity of the data point $u$, e.g., see \cite{AH2018,AH,BD2015, MR3789441,MR3996403, MR3563098,MRW2018}.  However, in many practical situations, the data is not generic, e.g., when the data is sparse, 
or one may consider the Eckart-Young problem (i.e., finding the closest bounded rank matrix) with low rank data.

Results of \cite{MRW5}, some of which are overviewed in Section \ref{sec:2}, 
allow one to handle situations when the data is not generic by observing the ``limiting'' behavior of critical sets obtained for generic choices of data. 
More precisely, by adding some {\it noise} $\epsilon \in \C^N$ to an arbitrary data point $u$, one is back in the generic situation, and the limiting behavior of critical points of $d_{u+t\epsilon}$ on $X_{\rm reg}$ for $t \in \C^*$ (with $\vert t \vert$ very small), as $t$ approaches the origin of $\C$, yields valuable information about the initial nearest point problem. 

Notice that, in the notations of the previous paragraph, one can write $d_{u+t\epsilon}(x)=d_{u}(x)- tl(x) + c$,
with $l(x)=2 \sum_{i=1}^N  \epsilon_i  x_i$ and $c$ is a constant with respect to $x$.
So the critical points of $d_{u+t\epsilon}$ coincide with those of $d_u-tl$. 
Moreover, since  $\epsilon$ is generic, $l$ is a generic linear function.
This observation places us at the origins of the following Morsification procedure considered in particular in the recent paper \cite{MRW5}. 

Let $f\colon \C^N \to \C$ be a polynomial function, and let $l\colon \bC^N \to \bC$ be a linear function. Let $X \subset \C^N$ be a possibly singular closed irreducible subvariety such that $f$ is not constant on $X$, and restrict $f$ and $l$ to $X$. If the linear function $l$ is general enough, then the function $f_t:=f-tl$ is a holomorphic Morse function on $X_{\rm reg}$ (i.e., it has only non-degenerate isolated critical points) for all but finitely many $t \in \C$. One is typically  interested in studying the limiting behavior of the set of critical points of $f_t\vert_{X_{\reg}}$ as $t$ approaches $0 \in \bC$. 

 In the case $X=\C^N$ and when $f\colon \C^N \to \C$ is a polynomial function with only isolated singularities $P_1,\ldots,P_r$, a solution to the above problem is provided by the classical Morsification picture, as proved by Brieskorn in  \cite[Appendix]{Br}. More precisely, if $\mu_i$ is the {Milnor number} of $f$ at $P_i$ (cf. \cite{Mi}), then in a small neighborhood of $P_i$ the function $f_t$ has $\mu_i$ Morse critical points which, as $t$ approaches $0$, collide together at $P_i$. 
 
In the general situation, for $X$ and $f$ with arbitrary singularities, the paper \cite{MRW5} studies this limiting behavior of the set of critical points of $f_t\vert_{X_{\reg}}$ by using constructible functions and vanishing cycle techniques, together with work of Ginsburg \cite{G} on characteristic cycles. 
In order to understand the significance of formula \eqref{eq_maini} displayed below, which is the main result of \cite{MRW5}, we need to explain its ingredients, as follows.
Let $\Eu_X$ be the local Euler obstruction function on $X$, regarded as a constructible function on $\bC^N$ by extension by zero. Then $\Eu_X$ is constructible on $\bC^N$ for any Whitney stratification of $X$, to which one adds the open stratum $\bC^N \setminus X$. 
 We endow $X$ with a Whitney stratification $\mathscr W$ with finitely many strata, with respect to which the stratified singular set of $f$ is defined as $\Sing_{\mathscr W}f:=\bigcup_{V\in \mathscr W} \Sing (f\vert_{V})$. The set $\Sing_{\mathscr W}f$ is then a closed set in $X$ distributed in a finite number of critical fibers of $f$. 
 If $c \in \bC$ is a critical value of $f\colon X \to \bC$ and $\Phi_{f-c}$ denotes the corresponding vanishing cycle functor of constructible functions, then $\Phi_{f-c}(\Eu_X)$ is a constructible function supported on $\Sigma_c:=\{f=c\} \cap \Sing_\mathscr{W} f$. Each $\Sigma_c$ gets an induced Whitney stratification witnessing the constructibility of $\Phi_{f-c}(\Eu_X)$. One can thus refine $\mathscr W$ to a Whitney stratification $\mathscr S$ of $X$ adapted to the functions $\Phi_{f-c}(\Eu_X)$, for all critical values $c$ of $f$, and we will occasionally use the notation  $\Sing_{\mathscr S}f$ instead of $\Sing_{\mathscr W}f$ whenever strata in each $\Sigma_c$ need to be taken into account.(Of course, as sets, $\Sing_{\mathscr S}f =\Sing_{\mathscr W}f$.)
 Following \cite{MRW5}, we introduce uniquely determined integers $n_V$ for each stratum $V \subset \Sing_\mathscr{S} f$
so that:
\be\label{fcfi}
\sum_{c \in \bC} \Phi_{f-c}(\Eu_X)=\sum_{V \subset \Sing_\mathscr{S} f} (-1)^{\codim V-1}\cdot n_V \cdot \Eu_{\overline{V}}.
\ee
It can be shown that $n_V \geq 0$, for any $V \subset \Sing_\mathscr{S} f$.
With these notations, the main result of \cite{MRW5} can now be stated as follows (cf. Definition \ref{def21} for the notion of the limit of a family of sets of points):
\begin{equation}\label{eq_maini}
\lim_{t\to 0}\Sing(f_t|_{X_{\reg}})=\sum_{V \subset \Sing_\mathscr{S} f}n_{V}\cdot\Sing(l|_{V}).
\end{equation}
The left hand side of \eqref{eq_maini} does not take into account the points of $\Sing(f_t|_{X_{\reg}})$ which escape at infinity as $t \to 0$, that is, the singular points of $f$ on $X_{\reg}$ which are outside a sufficiently large ball centered at the origin for sufficiently small $t$.

Let us now get back to the calculation of the ED degree of the affine variety $X \subset \bC^N$. In this case, we let $f=d_u$ be the squared Euclidean distance function as above, but with $u \in \bC^N$ {\it arbitrary}. For $l$ a general linear function, if no points of $\Sing(f_t|_{X_{\reg}})$ go to infinity as $t \to 0\in \bC$, a formula for ${\rm EDdeg}(X)$ was given in \cite{MRW5} in terms of the multiplicities $n_V$ as:
\be\label{edi}
{\rm EDdeg}(X)=\sum_{V \subset \Sing_\mathscr{S} f}n_{V}\cdot \#\Sing(l|_{V})= \# \Sing(f_t|_{X_{\reg}}),
\ee
with $\#$ denoting the cardinality of a set.

Formulas \eqref{eq_maini} and  \eqref{edi} emphasize the need for computability of the multiplicities $n_V$, which measure the asymptotics of singularities in a Morse perturbation of $f$. 

\medskip

The main goal of this paper is to produce an efficient computational method  for the asymptotics of Morse singularities in a generic degeneration of a polynomial function. Specifically, we aim to detect the limit points of the stratified Morse trajectories and to compute the multiplicities of these limit points.

We start from observing that the general polar curve $\Gamma_{\mathscr W}(l,f)$, an algebraic set of dimension $1$, contains all the trajectories of the stratified Morse points of $f_{t}$ when $t\to 0$ (including those which escape to infinity).
We show that the multiplicities $n_V$ occurring in formula \eqref{eq_maini}, for $V$ a stratum of $\sS$ contained in a critical fiber of $f$,   may be interpreted as the number of Morse points of $f_{t}$ on $X_{\reg}$  which converge,  when $t\to 0$, to any of the points in $V \cap \Gamma_{\mathscr W}(l,f)$. In this way, the numbers  $n_V$ become  ``local multiplicities'' at those well determined points.   The advantage of this viewpoint on the localisation of the multiplicities is that the global polar curve is computable by equations.

Nevertheless, an explicit calculation of the local multiplicities $n_V$ of formula \eqref{eq_maini} in terms of the geometry and topology of the pair $(X,f)$ is difficult in general.  One of the main technical goals of this note is to overcome this difficulty by using new strategies. In Section \ref{sec:3}, we use the integers
$$\mu_V=\Phi_{f-c}(\Eu_X)(V),$$
i.e., the values
of the constructible function $\Phi_{f-c}(\Eu_X)$ along critical strata $V \subset \Sing_\mathscr{S} f$ of $f$, to
produce the following  formula for the multiplicities $n_V$ 
(see Theorem \ref{nvgen}): 
\begin{theorem}\label{nvgeni}
Let $X \subset \bC^N$ be a  complex affine variety, and $f\colon X \to \bC$ the restriction to $X$ of a polynomial function. Then, for any critical value $c$ of $f$, the multiplicities $n_V$ for singular strata $V \subset f^{-1}(c)$ are given by:
\begin{equation}\label{nvgi}
n_V=(-1)^{\codim V -1} \{ \mu_V - \sum_{ \{S \mid V \subset \overline{S} \setminus S \} } \chi_c(\lk_{\overline S}(V)) \cdot \mu_S \},
\end{equation}
where:
\begin{enumerate}
\item[(i)] the summation is over singular strata $S$ in $f^{-1}(c)$, different from $V$, which contain $V$ in their closure.
\item[(ii)] $\chi_c(\lk_{\overline S}(V))$ is the compactly supported Euler characteristic of the complex link of $V$ in $\bar S$, for a pair of singular strata $(V,S)$ in $f^{-1}(c)$, with $V \subset \overline{S}\setminus S$.
\end{enumerate} 
\end{theorem}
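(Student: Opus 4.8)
The plan is to obtain \eqref{nvgi} by \emph{inverting} the defining relation \eqref{fcfi}, one critical fiber of $f$ at a time. Fix a critical value $c$ of $f$. By construction of $\mathscr S$ every stratum $V\subset\Sing_{\mathscr S}f$ sits inside a single fiber of $f$, so $\overline V$ lies in that (closed) fiber and $\Eu_{\overline V}$ vanishes identically on $f^{-1}(c)$ unless $V\subset f^{-1}(c)$; since $\Phi_{f-c}(\Eu_X)$ is supported on $\Sigma_c\subset f^{-1}(c)$, restricting \eqref{fcfi} to $f^{-1}(c)$ leaves
\[
\Phi_{f-c}(\Eu_X)=\sum_{V\subset f^{-1}(c)\cap\Sing_{\mathscr S}f}(-1)^{\codim V-1}\,n_V\cdot\Eu_{\overline V}.
\]
Evaluating both sides at a point of a stratum $V$, and using that $\Eu_{\overline V}(V)=1$ (the stratum $V$ lies in the smooth locus of $\overline V$) while $\Eu_{\overline S}(V)=0$ unless $V\subseteq\overline S$, one gets the linear system
\[
\mu_V=(-1)^{\codim V-1}n_V+\sum_{\{S\,\mid\,V\subset\overline S\setminus S\}}(-1)^{\codim S-1}\,\Eu_{\overline S}(V)\cdot n_S ,
\]
which is triangular for the order ``$V\subset\overline S$'' on the strata of $f^{-1}(c)\cap\Sing_{\mathscr S}f$, with diagonal entries $(-1)^{\codim V-1}=\pm1$; hence it determines the $n_V$ from the $\mu_S$ over $\bZ$.

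To recognize the resulting inverse as the closed formula \eqref{nvgi}, the essential ingredient is the classical description of the local Euler obstruction along a Whitney stratification in terms of compactly supported Euler characteristics of complex links: for $V\subset\overline S\setminus S$, the value $\Eu_{\overline S}(V)$ equals the sum over all chains of strata $V=S_0,S_1,\dots,S_k=S$ with $S_{i-1}\subset\overline{S_i}\setminus S_i$ of the products $\prod_{i=1}^{k}\chi_c\!\big(\lk_{\overline{S_i}}(S_{i-1})\big)$; equivalently, $\Eu_{\overline S}$ obeys the recursion
\[
\Eu_{\overline R}(V)=\chi_c\!\big(\lk_{\overline R}(V)\big)+\sum_{\{S\,\mid\,V\subset\overline S\setminus S,\ S\subset\overline R\setminus R\}}\Eu_{\overline S}(V)\cdot\chi_c\!\big(\lk_{\overline R}(S)\big).
\]
This follows from the local conic structure of Whitney stratified sets together with the normalization $\Eu\equiv1$ on the regular part, cf. the work of Gonz\'{a}lez-Sprinberg, L\^{e}--Teissier, Goresky--MacPherson and Sch\"{u}rmann. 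Plugging this into the triangular system and solving it, the chain-sums collapse against the signs $(-1)^{\codim S-1}$ so that only the length-one terms $\chi_c(\lk_{\overline S}(V))$ survive, which is exactly \eqref{nvgi}.

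Concretely, the last step is a downward induction on $\dim V$ among the critical strata contained in $f^{-1}(c)$: for $V$ maximal the triangular relation reads $\mu_V=(-1)^{\codim V-1}n_V$, which is the formula, and in the inductive step one substitutes the previously established expressions for the $n_S$ with $V\subset\overline S\setminus S$ into the triangular relation and gathers the coefficient of each $\mu_R$, which reduces precisely to the displayed complex-link recursion. The main obstacle is that recursion itself: fixing the exact shape of the local Euler obstruction in terms of compactly supported Euler characteristics of the (iterated) complex links $\lk_{\overline S}(V)$, keeping the signs in line via $\dim_{\bC}\lk_{\overline S}(V)=\codim_{\overline S}V-1$, and checking that only \emph{singular} strata contribute --- the open stratum $X_{\reg}$ carrying none of the mass of $\Phi_{f-c}(\Eu_X)$. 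Granting this, the remainder is a formal triangular linear-algebra computation.
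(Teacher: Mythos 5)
Your strategy is sound but genuinely different in packaging from the paper's. The paper proves \eqref{nvgi} in one step: since the $n_V$ are by definition the coefficients of $CC\bigl((-1)^{\dim X-1}\Phi_{f-c}(\Eu_X)\bigr)$ in the basis of conormal cycles, it simply invokes the local index formula for the characteristic cycle of a constructible function (\cite[Sect.~8.2]{G}, \cite[Sect.~5.0.3]{Sch}), which expresses each microlocal multiplicity $c_V(\varphi)$ as a linear combination of the stalk values $\varphi(S)$ with complex-link Euler characteristics as weights; there is no system to invert. Your route --- evaluate the identity \eqref{fcfi} at stalks to get a unitriangular system with matrix $\bigl(\Eu_{\overline S}(V)\bigr)$ and then invert it --- is precisely the dual of this: the assertion that the matrix of complex-link Euler characteristics inverts the matrix of Euler obstruction values is equivalent, via $CC(\Eu_{\overline V})=(-1)^{\dim V}[T^*_{\overline V}\bC^N]$, to the index formula the paper cites. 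So the two arguments rest on the same key input; yours is more self-contained in spirit, but it shifts all of the weight onto the recursion for $\Eu_{\overline S}(V)$, which you assert rather than prove.

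That recursion, as displayed, is not correct, and this is a genuine gap rather than a notational quibble. The standard identity (Brasselet--L\^e--Seade, \cite[Theorem 3.1]{BLS}, quoted in Remark \ref{27}) reads $\Eu_{\overline R}(V)=\sum_{S}\chi\bigl(\lk_{\overline R}(V)\cap S\bigr)\cdot\Eu_{\overline R}(S)$: the weights are Euler characteristics of the pieces of the complex link lying in the \emph{open} strata, not the full complex links $\lk_{\overline S}(V)$ taken in closures. Already for a chain of three strata $V\subset\overline S\setminus S$, $S\subset\overline R\setminus R$, your recursion gives $\Eu_{\overline R}(V)=\chi_c(\lk_{\overline R}(V))+\Eu_{\overline S}(V)\,\chi_c(\lk_{\overline R}(S))$, which exceeds the Brasselet--L\^e--Seade value by $\chi_c\bigl(\lk_{\overline R}(V)\cap S\bigr)$, because $\lk_{\overline R}(V)=\bigl(\lk_{\overline R}(V)\cap S\bigr)\sqcup\bigl(\lk_{\overline R}(V)\cap R\bigr)$ while $\Eu_{\overline S}(V)=\chi_c\bigl(\lk_{\overline R}(V)\cap S\bigr)$ in that situation; so the chain terms get double-counted. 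The fix is to take the factors of your chain formula to be $\chi_c\bigl(\lk_{\overline{S_i}}(S_{i-1})\cap S_i\bigr)$ (these coincide with $\chi_c(\lk_{\overline{S_i}}(S_{i-1}))$ exactly when there is no intermediate singular stratum between $S_{i-1}$ and $S_i$). With that correction --- justified by citing \cite{BLS} or directly the index formula --- your downward induction does close up and yields the theorem. As written, the load-bearing step, which you yourself flag as ``the main obstacle,'' is both unproved and mis-stated.
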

Formula \eqref{nvgi} becomes quite explicit if  $X$ is smooth, since in this case 
$\mu_V=\chi(\widetilde{H}^*(F_V;\bC))$ 
is just  the Euler characteristic of the reduced cohomology of the Milnor fiber $F_V$ of the hypersurface $\{f=c\}$ at some point in $V$  (cf Remark \ref{xsm}).
In particular, if $X$ is smooth and  $f\colon X\to \C$ has only isolated singularities, then at such an isolated singular point $P$ we have $n_P=\mu_P$, the Milnor number of $f$ at $P$, as predicted by the classical morsification picture. 

The integers $\mu_V$ appearing in \eqref{nvgi} are also used in Theorem \ref{morsenumber} to give a new interpretation for  the number of Morse critical points on the regular part of $X$ in a Morsification of $f$. More precisely, by combining   results of \cite{MRW5} and \cite{STV1} with standard properties of MacPherson's local Euler obstruction function, we prove the following result (see Theorem \ref{morsenumber}).
\begin{theorem}\label{morsenumberi}
The number of Morse critical points on $X_{\reg}$ in a generic deformation $f_t:=f-tl\colon X \to \bC$ of $f$ is given by:
\be\label{mnui}
\# \Sing(f_t|_{X_{\reg}})=m_\infty + (-1)^{\dim X-1}  \sum_{c \in \bC} \left( \sum_{V \subset f^{-1}(c) \cap  \Sing_\mathscr{S} f} \chi(V \setminus V \cap H_t) \cdot \mu_V \right) ,
\ee
where $m_\infty $ is the number of points of $\Sing(f_t|_{X_{\reg}})$ that escape to infinity as $t \to 0$,  the first sum is over the critical values $c$ of $f$, and $H_t:=l^{-1}(t)$ is a generic hyperplane.
\end{theorem}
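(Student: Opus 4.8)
The plan is to combine the limit formula \eqref{eq_maini} of \cite{MRW5} with a stratified global polar degree formula, and then to reorganize the resulting sum by means of the defining identity \eqref{fcfi} of the integers $n_V$. For the first step, observe that the limit in \eqref{eq_maini} records, by construction, only those Morse points of $f_t$ that stay in a bounded region as $t\to 0$; hence taking total degrees (cardinalities counted with multiplicity) on both sides of \eqref{eq_maini} gives
\[
\#\Sing(f_t|_{X_{\reg}})=m_\infty+\sum_{V\subset\Sing_{\mathscr{S}}f}n_V\cdot\#\Sing(l|_V),
\]
which is formula \eqref{edi} with the hypothesis ``$m_\infty=0$'' removed. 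It therefore remains to identify $\sum_{V}n_V\cdot\#\Sing(l|_V)$ with $(-1)^{\dim X-1}\sum_{c}\sum_{V\subset f^{-1}(c)\cap\Sing_{\mathscr{S}}f}\chi(V\setminus V\cap H_t)\cdot\mu_V$.

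The key input, drawn from \cite{STV1} together with standard properties of MacPherson's local Euler obstruction, is the following: for $l$ general and for general $t$ (so that $H_t=l^{-1}(t)$ is transverse to the finitely many closed sets $\overline{V}$), the number of Morse critical points of $l$ on the smooth stratum $V$ equals
\[
\#\Sing(l|_V)=(-1)^{\dim V}\,\chi\bigl(\overline{V}\setminus(\overline{V}\cap H_t),\ \Eu_{\overline{V}}\bigr)=(-1)^{\dim V}\sum_{W\subseteq \overline{V}}\Eu_{\overline{V}}(W)\cdot\chi(W\setminus W\cap H_t),
\]
the last equality being additivity of the weighted Euler characteristic over the strata $W$ of $\overline{V}$. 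Note that the same genericity of $l$ is already required for \eqref{eq_maini} to hold, so a single general $l$ (followed by a general value $t$) serves both purposes; note also that the naive replacement $\#\Sing(l|_V)\stackrel{?}{=}(-1)^{\dim V}\chi(V\setminus V\cap H_t)$ is \emph{false} for non-closed strata, which is exactly why the local Euler obstruction of $\overline{V}$ has to enter at this point.

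Substituting this into the displayed expression for $\#\Sing(f_t|_{X_{\reg}})$, using $(-1)^{\dim V}=(-1)^{\dim X-1}(-1)^{\codim V-1}$, and interchanging the order of summation, one obtains
\[
\sum_{V}n_V\,\#\Sing(l|_V)=(-1)^{\dim X-1}\sum_{W}\chi(W\setminus W\cap H_t)\Bigl(\sum_{\{V\,:\,W\subseteq \overline{V}\}}(-1)^{\codim V-1}n_V\,\Eu_{\overline{V}}(W)\Bigr).
\]
Now evaluate the equality of constructible functions \eqref{fcfi} at a general point of the stratum $W\subset\Sing_{\mathscr{S}}f$: since $\Eu_{\overline{V}}$ vanishes off $\overline{V}$ and takes the value $1$ on its own open stratum, the inner parenthesis equals $\bigl(\sum_{c}\Phi_{f-c}(\Eu_X)\bigr)(W)=\mu_W$, where $c$ is the unique critical value of $f$ with $W\subset f^{-1}(c)$. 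Grouping the strata $W$ according to the critical fiber containing them then yields exactly \eqref{mnui}.

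I expect the main obstacle to be the polar degree formula of the second step: one needs precisely its ``stratified'' form, with the constant function $\mathbf{1}_{\overline{V}}$ replaced by $\Eu_{\overline{V}}$, and one must verify that the hyperplane $H_t=l^{-1}(t)$ imposed by the Morsification is in sufficiently general position with respect to each $\overline{V}$ for that formula to apply (equivalently, that the general linear form $l$ used in \eqref{eq_maini} may be chosen general for this purpose as well). By contrast, the points escaping to infinity cause no trouble: the quantity $m_\infty$ is transported unchanged from \cite{MRW5} and requires no separate analysis.
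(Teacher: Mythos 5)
Your argument is correct and follows essentially the same route as the paper's proof: reduce to $\#\Sing(f_t|_{X_{\reg}})=m_\infty+\sum_V n_V\,\#\Sing(l|_V)$, apply the global polar formula of \cite{STV1} in its weighted (Euler-obstruction) form together with the transversality identity $\Eu_{\overline{V}\cap H_t}=\Eu_{\overline{V}}|_{H_t}$, and then use the defining identity \eqref{fcfi} to trade the combination $\sum_V(-1)^{\codim V-1}n_V\Eu_{\overline{V}}$ for the values $\mu_W$. The only cosmetic difference is that you expand the weighted Euler characteristics stratum-by-stratum and evaluate \eqref{fcfi} pointwise, whereas the paper first takes global Euler characteristics over $\Sigma_c$ and $\Sigma_c\cap H_t$ and then refines the stratification by $H_t$; the two bookkeeping schemes are equivalent.
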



When no critical points of $f_t|_{X_{\reg}}$ escape at infinity as $t\to 0$, 
one obtains from \eqref{edi} and \eqref{mnui} a new and explicit formula for the ED degree.

\smallskip

In Section \ref{s:isoldecomp} we focus on functions $f\colon X \to \C$ with only stratified isolated singularities, 
and we indicate a computation of the multiplicities $n_{P}$, for any point $P \in \Sing_{\mathscr W} f$,  in terms of polar intersection numbers. More precisely, 
one has the following:

\begin{theorem}\label{t:main1i}
Let $X\subset \bC^{N}$ be any affine variety and let $f\colon X\to \bC$ be the restriction of a polynomial function on $\bC^{N}$ such that it has only  isolated singularities on $X$ with respect to the stratification $\sW$. Then, for any general linear function $l$,  we have the following equality at any point $P \in \Sing_{\sW} f$:
\begin{equation}\label{eq:main1i}
n_P= \mult_{P}(\Gamma_{X_{\reg}}(l,f), f^{-1}(f(P)) - \mult_{P}(\Gamma_{X_{reg}}(l,f), l^{-1}(l(P)),
  \end{equation}
  where 
   $$\Gamma_{X_{\reg}}(l,f) :=  \overline{ \Sing ((l,f)\vert_{X_{\reg}}) \setminus \Sing (f\vert_{X_{\reg}})} $$ 
   is the polar locus of the Zariski-open stratum $X_{\reg}$ of $\sW$ (cf Definition \ref{d:polarcurve}).
\end{theorem}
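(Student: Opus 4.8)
\emph{Step 1: reduce $n_P$ to a local Morse count on the polar curve.} Since $f$ has only isolated stratified singularities, every stratum of $\Sing_{\sW}f$ is a point, so the sum in \eqref{eq_maini} runs over the points $P'\in\Sing_{\sW}f$, for each of which $\Sing(l|_{P'})=\{P'\}$; hence $\lim_{t\to0}\Sing(f_t|_{X_{\reg}})=\sum_{P'}n_{P'}\cdot[P']$, and $n_P$ is precisely the number of Morse critical points of $f_t|_{X_{\reg}}$ that collide with $P$ as $t\to0$. Each such point $x$ satisfies $df_x=t\,dl_x$ on $T_xX_{\reg}$; if $df_x=0$ then $x$ belongs to the fixed finite set $\Sing(f|_{X_{\reg}})$, which for general $l$ and $t\ne0$ contains neither $P$ nor any critical point of $f_t$, while otherwise $x\in\Sing((l,f)|_{X_{\reg}})\setminus\Sing(f|_{X_{\reg}})\subset\Gamma$, where $\Gamma:=\Gamma_{X_{\reg}}(l,f)$. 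Thus if $P\notin\Gamma$ then $n_P=0$ and both terms on the right of \eqref{eq:main1i} vanish; otherwise fix a small ball $B$ around $P$ and write $\Gamma\cap B=\Gamma_1\cup\cdots\cup\Gamma_k$ as a union of irreducible branches through $P$.

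\emph{Step 2: the slope function along the branches.} On $\Gamma$ the differentials of $f|_{X_{\reg}}$ and $l|_{X_{\reg}}$ are proportional, so there is a function $\lambda$, meromorphic along each branch, with $df=\lambda\,dl$ on $TX_{\reg}$, and $x\in\Gamma$ is a critical point of $f_t|_{X_{\reg}}$ exactly when $\lambda(x)=t$. Parametrize each branch by its normalization $\nu_j\colon(\bC,0)\to(\Gamma_j,P)$ and set $g_j=f\circ\nu_j$, $s_j=l\circ\nu_j$. Since $l$ is general, $s_j$ is non-constant and the hyperplane $l^{-1}(l(P))$ meets $\Gamma_j$ with minimal multiplicity, so
\[
\ord_0(s_j-l(P))=\mult_P(\Gamma_j,l^{-1}(l(P)))\le\ord_0(g_j-f(P))=\mult_P(\Gamma_j,f^{-1}(f(P)));
\]
in particular $g_j$ is non-constant, for otherwise $\Gamma_j\cap X_{\reg}$ would consist of critical points of $f$, contradicting that the singularities are isolated. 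Pulling back $df=\lambda\,dl$ along $\nu_j$ yields $g_j'=(\lambda\circ\nu_j)s_j'$, and the displayed inequality shows that $\lambda\circ\nu_j=g_j'/s_j'$ is holomorphic at $0$ with $\ord_0(\lambda\circ\nu_j)=\mult_P(\Gamma_j,f^{-1}(f(P)))-\mult_P(\Gamma_j,l^{-1}(l(P)))$.

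\emph{Step 3: the count.} For general $l$ and all but finitely many $t$ the function $f_t|_{X_{\reg}}$ is Morse, and since $\Gamma_j\not\subset\Sing X$, for small generic $t\ne0$ every solution of $\lambda\circ\nu_j=t$ near $0$ lies in $X_{\reg}$ and is a non-degenerate critical point of $f_t|_{X_{\reg}}$; by Rouch\'e there are exactly $\ord_0(\lambda\circ\nu_j)$ of them, and each converges to $P$ as $t\to0$. Summing over the branches of $\Gamma$ at $P$ and using additivity of intersection multiplicity over branches gives
\[
n_P=\sum_{j=1}^k\ord_0(\lambda\circ\nu_j)=\mult_P(\Gamma,f^{-1}(f(P)))-\mult_P(\Gamma,l^{-1}(l(P))),
\]
which is \eqref{eq:main1i}.

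The main obstacle is Step 1: rigorously identifying the constructible-function multiplicity $n_P$ of \eqref{fcfi}--\eqref{eq_maini} with the local count of Morse trajectories limiting to $P$, and certifying that none of the relevant Morse critical points of $f_t|_{X_{\reg}}$ leaves the open-stratum polar curve $\Gamma_{X_{\reg}}(l,f)$. This is exactly where the hypothesis of only isolated stratified singularities, together with the genericity of $l$, is indispensable; granting it, Steps 2--3 amount to a routine order-of-vanishing computation on the normalizations of the branches of the polar curve.
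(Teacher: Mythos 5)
Your proof is correct, and while it follows the same two-step architecture as the paper --- (i) identify $n_P$ with the number of Morse points of $f_t|_{X_{\reg}}$ colliding with $P$ as $t\to 0$, and (ii) compute that local count by polar intersection numbers --- it differs substantively in how each step is carried out. For (i), the paper deduces the identification by combining the relative-Euler-obstruction formula \eqref{niis} and the defect formula \eqref{def3} with a result of Seade--Tib\u{a}r--Verjovsky \cite{STV2}; you instead read it off directly from the limit formula \eqref{eq_main} of Theorem \ref{thm_main} together with Definition \ref{def21}, using that for point strata $\Sing(l|_{\{P\}})=\{P\}$. That shortcut is legitimate and arguably cleaner, given that Theorem \ref{thm_main} is already quoted from \cite{MRW5}. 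For (ii), the paper simply cites Massey's theorem \cite[Theorem 3.2]{Massey2} for the local formula \eqref{eq:masseyformula}, whereas you reprove it from scratch: every critical point of $f_t|_{X_{\reg}}$ with $t\neq 0$ lies on $\Gamma_{X_{\reg}}(l,f)$, the slope function $\lambda\circ\nu_j=g_j'/s_j'$ on the normalization of each branch is holomorphic at $0$ of order $\mult_P(\Gamma_j,f^{-1}(f(P)))-\mult_P(\Gamma_j,l^{-1}(l(P)))$, and Rouch\'e counts the solutions of $\lambda\circ\nu_j=t$. This is essentially the ``direct and radically shorter proof'' that the authors defer to \cite{MT2}, and it makes the theorem self-contained at the cost of a few genericity verifications (that $\Sing(l|_{X_{\reg}})$ avoids $\Sing(f|_{X_{\reg}})$, that for generic small $t$ the solutions of $\lambda\circ\nu_j=t$ avoid the finitely many points of $\Gamma_j\setminus X_{\reg}$, and that $l^{-1}(l(P))$ meets each branch with the minimal multiplicity $\mult_P\Gamma_j$, which yields the inequality making $\lambda\circ\nu_j$ holomorphic) which you assert but could spell out more fully; none of these is a genuine gap.
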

In view of formula \eqref{edi}, we thus obtain in this setup a geometric and algebraically computable expression for the ED degree in terms of polar multiplicities. 




Several examples are discussed in Section \ref{s:ex}.

\medskip 

{\bf Acknowledgements.}  This work was started during a ``Research in Pairs'' program at the Mathematisch Forshungsinstitut Oberwolfach, to which the authors would like to thank for hospitality and ideal working conditions. L. Maxim was partially supported by the Simons Foundation, CNRS and the MPIM-Bonn. M. Tib\u{a}r acknowledges partial support by the Labex CEMPI grant (ANR-11-LABX-0007-01).


\section{Preliminaries}
\label{sec:2}





Let $X\subset \bC^N$ be an irreducible complex affine variety and let $f\colon \bC^N\to \bC$ be a polynomial function whose restriction to $X$ is nonconstant. 
Denote by $T^*_X \bC^N$ the conormal space of $X$ in the cotangent bundle $T^*\bC^N$, and recall that via the characteristic cycle functor one can define vanishing cycles of $f$ as a functor on conic lagrangian cycles in $T^*\bC^N$. 
Upon fixing a stratification $\sS$ of $X$ as in the Introduction, one has an  equality of lagrangian cycles
\begin{equation}\label{eq_nii}
\sum_{c\in \bC}\,^p\Phi_{f-c}([T_X^*\bC^N])=\sum_{V \subset \Sing_\mathscr{S} f}n_{V} \cdot [T^*_{\overline{V}}\bC^N],
\end{equation}
where $^p\Phi_{f-c}$ denotes the perverse vanishing cycle functor, and only strata $V$ in the stratified singular locus of $f$ which are contained in the critical fibers contribute to the sum on the right-hand side. 
 The sum on the left-hand side of \eqref{eq_nii} is finite since $f|_X$ has only finitely many critical values in the stratified sense, and $^p\Phi_{f-c}([T_X^*\bC^N])=0$ if $c$ is not a critical value. Moreover, the coefficients $n_V$ are nonnegative integers (see, e.g., \cite{Massey1}).

\medskip

 The multiplicities $n_V \in \bZ_{\geq 0}$  of \eqref{eq_nii} turn out to be particularly important for counting Morse critical points of a Morsification $f_t$ of $f$ on the regular locus $X_{\rm reg}$ of $X$, which converge to points in $X$ as $t \to 0$.
Let us explain this, by first recalling a few definitions from \cite{MRW5}. We refer to \cite[Section 2]{MRW5}  for more details on background and terminology.

By a {\it set of points} is meant a finite set endowed with a multiplicity function. This means that, after fixing a ground (i.e., ambient) set $S$, a set of points $\cM$ of $S$ is given by a function $\cM: S\to \mathbb{Z}_{\geq 0}$ such that $\cM(x)=0$ for all but finitely many $x\in S$. The value $\cM(x)$ is called the {\it multiplicity} of $\cM$ at $x$. 

\begin{definition}\label{def21}
Fix a Hausdorff space $S$ as the ground set, and let $\cM_t$ be a family of sets of points of $S$, parametrized by $t\in D^*$, with $D^*$ a punctured disc centered at the origin. The {\it limit} of $\cM_t$ as $t\to 0$, denoted by $\lim_{t\to 0}\cM_t$, is defined as the set of points given by:
\[
(  \lim_{t\to 0}\cM_t  )  (x)\coloneqq \varprojlim_{U}  \lim_{t\to 0}\sum_{y\in U}\cM_t(y),
\]
where $\varprojlim_U$ denotes the inverse limit over all open neighborhood of $x$. 
\end{definition}

All limits considered in \cite{MRW5} are of algebraic nature, and it is easy to see that in this case the limit $\lim_{t\to 0}\sM_t$ exists. Using these notations, the main result of \cite{MRW5} generalizes the classical Morsification picture \cite{Br} as follows:

\begin{theorem}\label{thm_main}\cite[Theorem 1.3]{MRW5}
Let $X\subset \bC^N$ be a complex affine variety and let $f\colon \bC^N\to \bC$ be a polynomial function whose restriction to $X$ is nonconstant. 
Let $l\colon \bC^N\to \bC$ be a general linear function, and consider $f_t\coloneqq f-tl$, $t \in \bC$, as a function on $X$. 
The limit of the critical points of $f_t$ satisfies
\begin{equation}\label{eq_main}
\lim_{t\to 0}\Sing(f_t|_{X_{\reg}})=\sum_{V \subset \Sing_\mathscr{S} f}n_{V}\cdot\Sing(l|_{V})
\end{equation}
where the symbol $\Sing$ denotes the set of critical points, $\Sing_\mathscr{S} f$ is the stratified singular locus of $f$ with respect to the Whitney stratification $\mathscr{S}$ of $X$ chosen as above, and the numbers $n_V$ are uniquely determined by formula \eqref{eq_nii}. 
\end{theorem}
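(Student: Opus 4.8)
The plan is to derive the limit formula \eqref{eq_main} directly from the characteristic cycle identity \eqref{eq_nii}, by reducing the global statement about $\lim_{t\to 0}\Sing(f_t|_{X_{\reg}})$ to a purely local count of Morse critical points at each candidate limit point, and then matching that count with the conormal multiplicity $n_V$.

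First I would identify the support of the limit. Since $l$ is general, $f_t=f-tl$ is a stratified Morse function on $X_{\reg}$ for all but finitely many $t$, so $\Sing(f_t|_{X_{\reg}})$ consists of non-degenerate critical points, each of multiplicity one. As $t$ varies these points trace out arcs lying on the one-dimensional polar locus $\Gamma_{\mathscr W}(l,f)$, so every limit point as $t\to 0$ lies on $\Gamma_{\mathscr W}(l,f)\cap\Sing_\mathscr{S}f$. If such a limit point $p$ lies in a stratum $V\subset f^{-1}(c)$, then the defining condition $df\equiv t\,dl$ modulo the conormal to $X_{\reg}$, together with $f|_V\equiv c$ (so that $df|_{TV}=0$), forces $dl|_{T_pV}=0$, i.e. $p\in\Sing(l|_V)$. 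Hence the support of $\lim_{t\to 0}\Sing(f_t|_{X_{\reg}})$ is contained in $\bigcup_V\Sing(l|_V)$, and it suffices to show that for each such $p$ the local multiplicity $N_p$, namely the number of critical points of $f_t|_{X_{\reg}}$ converging to $p$, equals $\sum_{V\,:\,p\in\Sing(l|_V)}n_V$. Properness of the polar curve over a small disc in the $t$-parameter supplies a conservation-of-number argument ensuring that $N_p$ is well defined and independent of small $t\neq 0$.

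Next I would compute $N_p$ by stratified Morse theory. Fixing $p\in\Sing(l|_V)$ and a normal slice to $V$ at $p$, the local Morse data of $f_t$ near $p$ decomposes, by the Goresky--MacPherson product structure, into tangential data along $V$ and normal data transverse to $V$. Generality of $l$ makes $l|_V$ have a single non-degenerate critical point at $p$, giving tangential count one; the normal count is then the number of critical points on the smooth locus produced by Morsifying the isolated stratified singularity of $f$ in the slice. Thus $N_p$ equals this normal count, and the theorem reduces to identifying the normal count with $n_V$.

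The decisive step, and the hard part, is this identification. I would argue microlocally: the critical points of $f_t|_{X_{\reg}}$ are exactly the intersection points of the Lagrangian section $x\mapsto df(x)-t\,dl$ with the conormal $T^*_{X_{\reg}}\bC^N$, and by the index theorem for characteristic cycles \cite{G,Massey1} the number of such intersections concentrating over $p$ is read off from the coefficient of $[T^*_{\overline V}\bC^N]$ in the specialization of $[T_X^*\bC^N]$ to the fiber $f^{-1}(c)$. That specialization is precisely $\sum_{c}\,^p\Phi_{f-c}([T_X^*\bC^N])$, whose conormal expansion is \eqref{eq_nii} with coefficient $n_V$; combined with the tangential count one this yields $N_p=\sum_{V\,:\,p\in\Sing(l|_V)}n_V$, which is \eqref{eq_main}. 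The obstacle to be overcome is showing that the honest count of non-degenerate critical points on the smooth part $X_{\reg}$ --- rather than some Euler-characteristic-weighted count over all of $X$ --- is exactly what the perverse vanishing-cycle functor extracts from $[T_X^*\bC^N]=\CC(\Eu_X)$. This forces one to use genericity of $l$ carefully: to guarantee transversality so that each critical point is counted once, to ensure the tangential count is exactly one at every $p\in\Sing(l|_V)$, and to rule out spurious contributions from the lower-dimensional strata in the closures $\overline V$ --- the same phenomena encoded by the signs $(-1)^{\codim V-1}$ and responsible for the $n_V$ being the nonnegative integers that count genuine Morse points.
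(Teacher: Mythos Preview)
The present paper does not prove this theorem: it is stated in Section~\ref{sec:2} as background material quoted from \cite[Theorem~1.3]{MRW5}, with no proof given here. So there is no in-paper argument against which to compare your proposal. What the Introduction does say is that \cite{MRW5} ``studies this limiting behavior of the set of critical points of $f_t\vert_{X_{\reg}}$ by using constructible functions and vanishing cycle techniques, together with work of Ginsburg \cite{G} on characteristic cycles.'' Your outline --- localize on the polar curve, reduce to a normal-slice count via the tangential/normal decomposition of stratified Morse theory, and then identify that count with $n_V$ through the microlocal index theorem of \cite{G} applied to $[T_X^*\bC^N]$ --- matches that description and is indeed the shape of the argument in \cite{MRW5}.

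Two places where your sketch is loose. First, the claim that a limit point $p\in V\subset\Sing_{\mathscr S}f$ must satisfy $dl|_{T_pV}=0$ does not follow simply by restricting the relation $df\equiv t\,dl$ to $V$: the critical points whose limit you are taking live on $X_{\reg}$, not on $V$, so one needs Thom $(a_f)$-regularity along $V$ to pass to the limit. This is exactly what Proposition~\ref{polarlinear} of the present paper supplies. Second, since the strata of $\mathscr S$ are disjoint, each $p$ lies in a unique stratum $V$, so your sum $\sum_{V:p\in\Sing(l|_V)}n_V$ collapses to the single term $n_V$; the identification you actually need is $N_p=n_V$, and that is precisely where the Ginsburg-type index computation must be carried out in full, not merely invoked.
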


It should be noted that the left side of equation (\ref{eq_main}) does {not} take into account the points of $\Sing(f_t|_{X_{\reg}})$ that ``go to infinity'' as $t\to 0$. Here we say that \emph{no points of $\Sing(f_t|_{X_{\reg}})$ go to infinity} if 
$\bigcup_{0<t\leq \epsilon}\Sing(f_t|_{X_{\reg}})$
 is bounded in $\bC^N$ for any sufficiently small $\epsilon \in \mathbb{R}_{>0}$. 
So, by (\ref{eq_main}), no points of $\Sing(f_t|_{X_{\reg}})$ go to infinity if and only if
\be\label{count} \# \Sing(f_t|_{X_{\reg}})=\sum_{V \subset \Sing_\mathscr{S} f}n_V \cdot \#\Sing(l|_{V}),\ee
for general $t$, where $\#$ denotes the cardinality of a set.

When coupled  with  \cite[Theorem 1.3]{MRW2018}, 
formula \eqref{eq_main} yields the following computation of the Euclidean distance (ED) degree of an affine variety $X$, 
in terms of the 
 critical sets of a general linear function $l$ on the strata $V$,
and where the distance function is considered with respect to an \emph{arbitrary} data point $u$.
 
\begin{corollary}\label{cor:ED}\cite[Corollary 1.9]{MRW5} \ \\
Let  $(u_1,\dots,u_N)\in\mathbb{C}^N$ be a data point, let $f=\sum_{i=1}^N(x_i-u_i)^2$, and let  $X\subset\mathbb{C}^N$ be an algebraic variety.
If no points of $\Sing(f-tl|_{X_{\reg}})$ escape to infinity as $t\to 0$, 
then the Euclidean distance degree of $X$ is given by
$${\rm EDdeg}(X)=\sum_{V \subset \Sing_\mathscr{S} f}n_{V}\cdot \#\Sing(l|_{V}).$$
\end{corollary}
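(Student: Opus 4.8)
The plan is to exhibit, inside the one-parameter family $f_t = f - tl$ of Theorem~\ref{thm_main}, a generic Euclidean distance problem whose critical count equals ${\rm EDdeg}(X)$, and then to read off that count from the numerical consequence \eqref{count} of the limit formula \eqref{eq_main}. \emph{First}, I would record the reparametrization already observed in the Introduction: with $f := d_u = \sum_{i=1}^N(x_i-u_i)^2$ and $l(x) := 2\sum_{i=1}^N\epsilon_i x_i$ for a vector $\epsilon = (\epsilon_1,\dots,\epsilon_N)\in\bC^N$, expanding squares gives
\[
d_{u+t\epsilon}(x) = d_u(x) - t\,l(x) + c,
\]
with $c$ independent of $x$. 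Hence $\Sing(d_{u+t\epsilon}|_{X_{\reg}}) = \Sing(f_t|_{X_{\reg}})$ for every $t\in\bC$, and a general linear function $l$ corresponds precisely to a general perturbation direction $\epsilon$, and conversely.

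\emph{Second}, I would argue that the family realizes the ED degree for general $t$. By the definition of the ED degree \cite{DHOST} (and \cite[Theorem~1.3]{MRW2018}, which makes this count robust on the full regular locus $X_{\reg}$), there is a proper Zariski-closed subset $Z\subset\bC^N$ with $\#\Sing(d_v|_{X_{\reg}}) = {\rm EDdeg}(X)$ for all $v\notin Z$. Since $Z$ is a \emph{proper} closed subset, it cannot contain every affine line through the fixed point $u$; hence, for general $\epsilon$, the line $\{u+t\epsilon\}_{t\in\bC}$ meets $\bC^N\setminus Z$ in a Zariski-dense open subset of the line. Choosing $\epsilon$ at the same time generic enough that Theorem~\ref{thm_main} applies to $l$, I obtain $\#\Sing(f_t|_{X_{\reg}}) = \#\Sing(d_{u+t\epsilon}|_{X_{\reg}}) = {\rm EDdeg}(X)$ for all but finitely many, hence for general, $t$.

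\emph{Third}, I would compare with the limit formula. Since by hypothesis no points of $\Sing(f_t|_{X_{\reg}})$ escape to infinity as $t\to 0$, equation \eqref{count} --- the numerical shadow of \eqref{eq_main} obtained by totalling the multiplicities of Definition~\ref{def21} --- yields $\#\Sing(f_t|_{X_{\reg}}) = \sum_{V\subset\Sing_\mathscr{S} f} n_V\cdot\#\Sing(l|_V)$ for general $t$. Combined with the previous paragraph, this gives ${\rm EDdeg}(X) = \sum_{V\subset\Sing_\mathscr{S} f} n_V\cdot\#\Sing(l|_V)$, which is the assertion.

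The main obstacle is the genericity bookkeeping in the second step: one must produce a \emph{single} generic $\epsilon$ for which, simultaneously, (a) the linear function $l(x) = 2\sum_i\epsilon_i x_i$ is general in the (stratification-dependent) sense required by Theorem~\ref{thm_main}, and (b) the affine line $\{u+t\epsilon\}_{t\in\bC}$ --- whose base point $u$ is arbitrary and may itself lie on $Z$ --- stays outside $Z$ for all but finitely many values of $t$. Both (a) and (b) hold for $\epsilon$ in a Zariski-open dense set of directions, so they can be arranged together; and the standing hypothesis that no critical points escape to infinity is precisely what guarantees that the whole number ${\rm EDdeg}(X)$ is recovered by the finite limit \eqref{eq_main}, rather than being partly lost at infinity.
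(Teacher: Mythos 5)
Your proposal is correct and follows essentially the same route as the paper, which defers to \cite[Corollary 1.9]{MRW5} but sketches exactly this argument: the reparametrization $d_{u+t\epsilon}=d_u-tl+c$ from the Introduction, genericity of the perturbed data point $u+t\epsilon$ along a general line through $u$, and the counting identity \eqref{count} that the no-escape-to-infinity hypothesis extracts from the limit formula \eqref{eq_main}. Your second step merely makes explicit the Zariski-openness bookkeeping that the paper leaves implicit in the phrase ``general linear function $l$.''
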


\begin{remark}\label{r25}
Under the assumptions and notations of Corollary \ref{cor:ED}, if we assume moreover that $f$ has only isolated stratified critical points $P_1, \ldots, P_r$, then
\be\label{ediso}
{\rm EDdeg}(X)=\sum_{i=1}^r n_{P_i}.
\ee
For concrete applications of this formula, coupled with our polar interpretation of the multiplicities $n_{P_i}$ from Section \ref{s:polarcomp}, see Section \ref{s:ex}.
\end{remark}




\section{Local and global numbers of Morse points
in a generic deformation}\label{s:locglobMorse}\label{sec:3}


In this section, we obtain a general formula for the multiplicities $n_V$ appearing in equation \eqref{eq_nii} and Theorem \ref{thm_main}, 
 for arbitrary $X$ and for $f\colon X \to \C$ with arbitrary singularities; see Theorem \ref{nvgen}. In Section \ref{ss:numberMorse}, we show how formula \eqref{eq_main} together with results from \cite{STV1} can be used to compute  the number of Morse critical points on the regular part of $X$ in a Morsification of $f$.
  

\subsection{The multiplicities $n_V$}\label{mnv}

First note that, by using the characteristic cycle 
isomorphism $$CC:CF(\bC^N) \overset{\sim}{\lra} LC(T^*\bC^N)$$ between the free abelian group $CF(\bC^N)$ of algebraically constructible functions on $\bC^N$ and the free abelian group $LC(T^*\bC^N)$ spanned by conic lagrangian cycles in $T^*\bC^N$ (where we work with objects supported on $X$), equation \eqref{eq_nii} amounts to expressing (up to a sign), for each critical value $c$ of $f|_X$, the constructible function $\varphi_{f-c}(\Eu_X)$ in terms of the basis of local Euler obstruction functions $\Eu_{\overline{V}}$ corresponding to closures of singular strata $V$ in the special fiber $\{f=c\}$. Here, $\Eu_X$ denotes the local Euler obstruction function of $X$, introduced by MacPherson in \cite{M74}. Specifically, using the identity
\be
CC(\Eu_Z)=(-1)^{\dim Z} [T^*_Z\bC^N]
\ee
for any irreducible closed subvariety $Z \subset \bC^N$, together with the ``perverse shift'' $^p\Phi_{f-c}=\Phi_{f-c}[-1]$ (which in the language of constructible functions translates into multiplication by $-1$), equation \eqref{eq_nii} translates into the following identity of constructible functions:
\be\label{fcf}
\sum_{c \in \bC} \Phi_{f-c}(\Eu_X)=\sum_{V \subset \Sing_\mathscr{S} f} (-1)^{\codim V -1}\cdot n_V \cdot \Eu_{\overline{V}}.
\ee
In general, an explicit calculation of the coefficients $n_V$ in terms of the geometry and topology of the pair $(X,f)$ is difficult, and this is in fact the main technical goal of this note. 
In Example \ref{str_iso} and Section \ref{s:isoldecomp}, we consider the already interesting, but still challenging case when $f\colon X \to \C$ has only stratified isolated singularities and $X$ is arbitrary.  A general abstract formula for each $n_V$ is obtained in Theorem \ref{nvgen} below; this formula become quite explicit,  in terms of the topology of the pair $(X,f)$, in the case when $X$ is smooth and $f$ has arbitrary singularities, see Remark \ref{xsm} at the end of this subsection.


\begin{ex}\label{str_iso} This case was already considered in \cite[Example 1.2]{MRW5}, we include it here since it pertains to the results of Section \ref{s:isoldecomp}.  
	Let $X\subset \bC^N$ be an arbitrary complex affine variety, and assume that the polynomial function $f\colon X\to \bC$ is nonconstant and has only isolated stratified critical points $P_1, \ldots, P_r$. Then formula \eqref{fcf} becomes:
	\be\label{unu}\sum_{c \in \bC} \Phi_{f-c}(\Eu_X)=(-1)^{\dim X-1} \sum_{i=1}^r n_{P_i}\cdot \Eu_{P_i},\ee
	with $n_{P_i}$ obtained by evaluating \eqref{unu} at the point $P_i$, namely:
	\be\label{niis} n_{P_i}=(-1)^{\dim X-1}\Phi_{f-f(P_i)}(\Eu_X)(P_i) 
	= :(-1)^{\dim X} \Eu_{f-f(P_i)} (X,P_i),
	\ee
	 with the last term  denoting the {\it relative Euler obstruction} of the function $f-f(P_i)$ on $X$ at $P_i$ (as introduced in \cite{BMPS}).
	Theorem \ref{thm_main} specializes in this case to  
	\be\label{sta}
\lim_{t\to 0}\Sing(f_t|_{X_{\reg}})=\sum_{i=1}^r n_{P_i} \cdot P_i ,
\ee
with $n_{P_i}$ computed as in formula \eqref{niis}.
If $X$ is smooth, then $\Eu_X=1_X$, and formula \eqref{niis} yields that $n_{P_i}$ equals the Milnor number $\mu_{P_i}$ of $f$ at $P_i$, as predicted by the classical Morsification picture \cite{Br}. 
\end{ex}

Even in the particular case of Example \ref{str_iso}, it is difficult in general (when $X$ is arbitrarily singular) to evaluate the multiplicity $n_{P_i}$ by using formula \eqref{niis}. In Example \ref{str_isos}, we perform such an explicit (and lengthy) calculation in a very special situation, when $X$ has  itself  only an isolated singularity at the singularity of $f$. 
That particular calculation will be subsumed by
the general result discussed in Section \ref{s:isoldecomp}, which is, however, derived by other methods.


The next result gives a general abstract formula for the multiplicities $n_V$ appearing in formula \eqref{eq_nii} and Theorem \ref{thm_main}. For
 any singular stratum $V \subset f^{-1}(c)$, let us set the notation (and it should be clear for the context which critical fiber is considered):
\be\label{def} \mu_V:=\Phi_{f-c}(\Eu_X)(V).\ee

\begin{theorem}\label{nvgen}
Let $X \subset \bC^N$ be a  complex affine variety, and $f\colon X \to \bC$ the (non-constant) restriction to $X$ of a polynomial function. Then, for any critical value $c$ of $f$, the multiplicities $n_V$ for singular strata $V \subset f^{-1}(c)$, appearing in \eqref{eq_nii} or \eqref{fcf}, are computed by the formula:
\begin{equation}\label{nvg}
n_V=(-1)^{\codim V -1} \{ \mu_V - \sum_{ \{S \mid V \subset \overline{S} \setminus S \} } \chi_c(\lk_{\overline S}(V)) \cdot \mu_S \},
\end{equation}
where:
\begin{enumerate}
\item[(i)] the summation is over singular strata $S\in {\mathscr S}$ contained in $f^{-1}(c)$, different from $V$, and which contain $V$ in their closure.
\item[(ii)] $\chi_c(\lk_{\overline S}(V))$ is the compactly supported Euler characteristic of the complex link of $V$ in $\bar S$, for a pair of singular strata $(V,S)$ in $f^{-1}(c)$, with $V \subset \overline{S}\setminus S$.
\end{enumerate} 
\end{theorem}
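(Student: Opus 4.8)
The plan is to invert the linear system \eqref{fcf} restricted to a single critical fiber $f^{-1}(c)$. Since the local Euler obstruction functions $\{\Eu_{\overline V}\}$, as $V$ ranges over strata in $\Sigma_c$, form a basis of the group of constructible functions on $\Sigma_c$ (this is the classical fact behind the characteristic cycle isomorphism), the coefficients in \eqref{fcf} are uniquely determined by evaluating at suitable points. Concretely, the idea is: evaluate both sides of \eqref{fcf} at a generic point $x$ of a fixed stratum $V$. On the left one gets $\mu_V$ by definition \eqref{def}. On the right one gets $(-1)^{\codim V-1} n_V \cdot \Eu_{\overline V}(x)$ plus contributions $(-1)^{\codim S-1} n_S \cdot \Eu_{\overline S}(x)$ from every stratum $S$ with $V \subset \overline S$. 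Using $\Eu_{\overline V}(x)=1$ at a smooth point of $\overline V$, this yields a triangular system (triangular with respect to the partial order by closure-inclusion of strata) relating the $\mu_V$ and the $n_V$. Solving it expresses $n_V$ in terms of $\mu_V$ and the $n_S$ for larger strata $S$, which one can then back-substitute. However, the desired formula \eqref{nvg} is the \emph{other} inversion: it expresses $n_V$ directly in terms of the $\mu_S$'s. So the cleaner route is to note that \eqref{fcf}, read the other way, expresses each $\Eu_{\overline V}$ as a combination of the $\Phi_{f-c}(\Eu_X)$-functions, or rather to directly express the "$n$" side in terms of the "$\mu$" side.

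Here is the approach I would actually carry out. First, fix $c$ and work entirely inside $\Sigma_c = f^{-1}(c) \cap \Sing_{\mathscr S} f$, which is a compact (in the stratified sense, or at least a closed algebraic) set stratified by the strata $V \subset f^{-1}(c)$. Both sides of \eqref{fcf} are constructible functions supported on $\Sigma_c$; write $\beta := \sum_c \Phi_{f-c}(\Eu_X)$ restricted to this fiber, so $\beta(V) = \mu_V$ on each stratum. The key point is the standard formula for the value of a local Euler obstruction $\Eu_{\overline S}$ at a generic point of a smaller stratum $V \subset \overline S$: by the Lê--Teissier / BMPS description, $\Eu_{\overline S}(V)$ is computed via the complex link, and more precisely one has the "defect" formula
\[
\Eu_{\overline S}(x) = \chi\bigl(\Eu_{\overline S}|_{\text{complex link of } V \text{ in }\overline S}\bigr),
\]
which, when $\overline S$ is taken with its induced Whitney stratification and one uses the multiplicativity of $\Eu$, reduces to the Euler characteristic $\chi_c(\lk_{\overline S}(V))$ appearing in the statement. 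So I would establish (or cite) the matrix identity: if $A = (a_{VS})$ with $a_{VS} = \Eu_{\overline S}(V)$ for $V \subseteq \overline S$ and $0$ otherwise, and $a_{VV}=1$, then the vectors $(\mu_V)$ and $((-1)^{\codim V-1} n_V)$ are related by $\mu = A \cdot \bigl((-1)^{\codim \bullet -1} n_\bullet\bigr)$.

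The remaining step is to solve this for $n_V$. Since $A$ is unitriangular for the partial order "$V \leq S \iff V \subseteq \overline S$", it is invertible and the entries of $A^{-1}$ are given by the standard Möbius-type / alternating-sum formula over chains of strata. The formula \eqref{nvg} is exactly the statement that the inverse is obtained by subtracting off one layer: $(-1)^{\codim V -1} n_V = \mu_V - \sum_{\{S \mid V \subset \overline S \setminus S\}} \chi_c(\lk_{\overline S}(V))\, \mu_S$. To justify this particular (one-step rather than full-chain) form, I would argue that the $\mu_S$ themselves already encode the full sum over chains — equivalently, I would prove by downward induction on $\dim V$ that the right-hand side of \eqref{nvg} agrees with $(-1)^{\codim V-1} n_V$: assume it for all $S \supsetneq V$, substitute the linear relation $\mu_S = \sum_{S \subseteq \overline T} (-1)^{\codim T-1} n_T\, \Eu_{\overline T}(S)$ into the sum, and check that the coefficient of each $n_T$ telescopes, using the compatibility/transitivity of the complex-link Euler characteristics along a flag $V \subset S \subset T$ (i.e. $\Eu_{\overline T}(V) = \sum_{S: V \subset \overline S \subset \overline T} \chi_c(\lk_{\overline S}(V)) \cdot \Eu_{\overline T}(S)$, with the convention $S=V$ giving the $\Eu_{\overline T}(V)$ term, reorganized). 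I expect the main obstacle to be precisely this bookkeeping of the complex-link Euler characteristics along flags of strata: making sure the "link of $V$ in $\overline S$" invariants multiply/compose correctly so that the alternating sums collapse, and being careful with the $\chi$ vs. $\chi_c$ distinction (these agree for complex algebraic varieties by a theorem of Sullivan / the fact that complex links are even-dimensional, so I would invoke that to freely pass between them). Everything else is linear algebra over a unitriangular matrix.
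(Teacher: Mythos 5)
Your strategy is sound and, once one key input is supplied, it proves the theorem; but it takes the route dual to the paper's. The paper does not invert anything: it reads $n_V$ directly as the microlocal multiplicity of $[T^*_{\overline V}\bC^N]$ in the characteristic cycle of $(-1)^{\dim X-1}\Phi_{f-c}(\Eu_X)$ and quotes the Ginsburg--Sch\"urmann formula (\cite[Sect.~8.2]{G}, \cite[Sect.~5.0.3]{Sch}) expressing such multiplicities as an explicit linear combination of the stalk values $\mu_S$ with complex-link coefficients; formula \eqref{nvg} is that citation applied once. You instead stay entirely on the stalk side: evaluating \eqref{fcf} at generic points of strata gives the unitriangular system $\mu_V=\sum_{T:\,V\subseteq\overline T}(-1)^{\codim T-1}n_T\,\Eu_{\overline T}(V)$, which you then invert. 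That is legitimate and arguably more elementary, since it uses nothing microlocal beyond what is already packaged into \eqref{fcf}.

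The one genuine soft spot is the step you defer to ``bookkeeping of complex-link Euler characteristics along flags.'' This is not combinatorics: it is precisely the Brasselet--L\^e--Seade identity (\cite[Theorem 3.1]{BLS}, which the paper itself invokes for other purposes in Remark \ref{27}), namely, for strata $V\neq T$ with $V\subset\overline T$,
\[
\Eu_{\overline T}(V)\;=\;\sum_{\{S\,\mid\,V\subset\overline S\setminus S,\ S\subseteq\overline T\}}\chi_c(\lk_{\overline S}(V))\cdot\Eu_{\overline T}(S),
\]
the sum running over strata $S\neq V$ of $\overline T$. Your parenthetical ``with the convention $S=V$ giving the $\Eu_{\overline T}(V)$ term'' is off: including $S=V$ with coefficient $1$ would force the remaining sum to vanish, which is false. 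Once BLS is cited in the form above, no downward induction and no M\"obius/chain sums are needed: substitute the unitriangular system into the right-hand side of \eqref{nvg}, group terms by $T$, observe that the coefficient of $(-1)^{\codim V-1}n_V$ is $1$ and that the bracketed coefficient of each $(-1)^{\codim T-1}n_T$ with $T\neq V$ is exactly the left side minus the right side of the displayed identity, hence zero. The conceptual relation between the two proofs is that the stalk-to-microlocal matrix of \cite{Sch} and the Euler-obstruction transition matrix $\bigl(\Eu_{\overline S}(V)\bigr)$ are inverse to one another, which is equivalent to $CC(\Eu_{\overline V})=(-1)^{\dim V}[T^*_{\overline V}\bC^N]$, i.e.\ to BLS. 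So your route trades the citation of the characteristic-cycle multiplicity formula for an explicit citation of BLS; what it cannot do is produce that identity ``by bookkeeping,'' and that is the one gap to close.
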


\begin{proof}
For the expert reader, \eqref{nvg} is an application of the formula for the characteristic cycle of a bounded constructible complex (see, e.g., \cite[Sect.8.2]{G}). In the following, in order to show how the numbers $\mu_V$ come up, we will recast \eqref{nvg} in the language of constructible functions, as in \cite[Sect.5.0.3]{Sch}. 

The integers $n_V$ are defined by the identity \eqref{eq_nii}, whose localization at the critical fiber $f^{-1}(c)$ is obtained, up to the factor $(-1)^{\dim X -1}$, by applying the characteristic cycle functor $CC$ to 
\be\label{fcfb}
\Phi_{f-c}(\Eu_X)=\sum_{V \subset \Sigma_c } (-1)^{\codim V-1}\cdot n_V \cdot \Eu_{\overline{V}}.
\ee
Here, $\Sigma_c:=f^{-1}(c) \cap  \Sing_\mathscr{S} f$, and the summation in \eqref{fcfb} is over the singular strata $V$ contained in $f^{-1}(c)$.

Now recall that if $\varphi$ is a constructible function on a closed subvariety $Z \subseteq X$, which is a closed union of strata of $\mathscr{S}$, then the characteristic cycle  of $\varphi$, that is, 
\[ CC(\varphi)=\sum_{V \subset Z} c_V(\varphi) \cdot [T^*_{\overline{V}}\bC^N], \]
is determined by the microlocal multiplities $c_V(\varphi) \in \bZ$ attached to strata $V$ in $Z$. These multiplicities are computed by:
\be\label{mm} c_V(\varphi)=\sum_{V \subset \overline{S}} e_{V,S} \cdot \varphi(S),\ee
where $\varphi(S)$ is the value of $\varphi$ at a point in the stratum $S$, and
\begin{itemize}
\item[(i)] $e_{V,V}=(-1)^{\dim V}$,
\item[(ii)] $e_{V,S}=(-1)^{\dim V} \cdot \chi_c(\lk_{\overline S}(V))$, for any $V \subset \overline{S} \setminus S$.
\end{itemize}

 Formula \eqref{nvg} follows now by applying \eqref{mm} to the constructible function
 $$\varphi:=(-1)^{\dim X -1} \cdot  \Phi_{f-c}(\Eu_X)$$ on $\Sigma_c$, by noting that $n_V=c_V(\varphi)$ and $\mu_S=(-1)^{\dim X -1} \cdot \varphi(S)$.
 \end{proof}

\begin{remark}\label{27}
 Let us untangle some more details about  the integer $\mu_V$ defined in \eqref{def}. For a stratum $V \subset \Sigma_c$, we have:
\be\label{def2}\mu_V:=\Phi_{f-c}(\Eu_X)(V)=\Psi_{f-c}(\Eu_X)(V)-\Eu_X(V),\ee
with $\Psi_{f-c}:CF(X) \to CF(f^{-1}(c))$ the nearby cycle functor for the function $f-c$. Both terms on the right-hand side of \eqref{def2} can be expressed as a weighted sum over strata $S$ of $X$ different from $V$ and containing $V$ in their closure. Indeed, for a positive dimensional stratum $V$,  let $N_V$ be a general linear subspace of $\bC^N$ of codimension $\dim V$, which meets $V$ transversally at a point $v\in V$, and set $X_V:=X \cap N_V$ and $f_V:=f\vert_{X_V}$. Then, as in  \cite[Section 3]{BLS},
\be\label{def4} \Eu_X(V)=\Eu_{X_V}(v),\ee
and, similarly, by \cite[Lemma 4.3.4]{Sch},
\be\label{def5} \Psi_{f-c}(\Eu_X)(V)=\Psi_{f_V-c}(\Eu_{X_V})(v).\ee
Next, by applying \cite[Theorem 3.1]{BLS} (see also \cite{S}) to the germ $(X_V,v)$ with the induced stratification (in which $v$ is a zero-dimensional stratum), and using \eqref{def2}, \eqref{def4} and \eqref{def5}, we can write\footnote{Note that the positive dimensional strata in a stratification of $(X_V,v)$ are of the form $S \cap N_V$, with $S$ strata of $X$, and hence in one-to-one correspondence with the strata $S$ of $X$ different from $V$, and containing $V$ in their closure.}:
\be\label{def3}
\mu_V=\sum_{\{ S \mid V \subset \overline{S}\setminus S\}} 
\left[    
 \chi(f^{-1}(c') \cap B_{\epsilon}(v) \cap S) 
 - \chi(l^{-1}(\delta) \cap B_{\epsilon'}(v) \cap S) 
\right] \cdot \Eu_X(S)
\ee
with $\epsilon, \epsilon'>0$ sufficiently small, and $0<\vert c-c' \vert \ll \epsilon$, $0<\vert \delta \vert  \ll \epsilon'$. Here $B_\epsilon(v)$, $B_{\epsilon'}(v)$ are balls of radius $\epsilon$, resp., $\epsilon'$, centred at $v$ in the ambient space $\bC^N$.

The expression \eqref{def3} shows that the negative of the integer $\mu_V$ matches what, in \cite[Section 5]{BMPS}), was called the ``defect'' of the function $f-c$ on $X$ at some point of $V$.
\end{remark}

\begin{remark}\label{xsm}
In the case when $X$ is smooth, formula \eqref{nvg} reduces to an explicit topological computation of the multiplicities $n_V$ in terms of Milnor fiber and complex link information. 
Indeed, if $X$ is smooth then $\Eu_X=1_X$. Hence, the quantity $\mu_V$ appearing in  formula \eqref{nvg} becomes:
 \be\label{nv} \mu_V=\Phi_{f-c}(1_X)(V)=\chi(\widetilde{H}^*(F_V;\bC)),\ee
i.e.,  the Euler characteristic of the reduced cohomology of the Milnor fiber $F_V$ of the hypersurface $\{f=c\}$ at some point in $V$.
 If, moreover,  $f\colon X\to \C$ has only isolated singularities, then formula 
\eqref{nvg} at such an isolated singular point $P$ reduces, in view of \eqref{nv}, to $n_P=\mu_P$, the Milnor number of $f$ at $P$, as already indicated in Example \ref{str_iso}.
\end{remark}

\subsection{The number of Morse points on $X_{\rm reg}$ in a generic deformation of $f$}\label{ss:numberMorse}
In this section we show how the defects $\mu_V$ appearing in Theorem \ref{nvgen} and Remark \ref{27} can be used to compute the number of Morse critical points on the regular part of $X$ in a Morsification of $f$.

Let, as before, $X \subset \bC^N$ be a  complex affine variety, and $f\colon X \to \bC$ the (non-constant) restriction to $X$ of a polynomial function.  Let $l$ be a general linear function, and consider the deformation of $f$ given by $f_t:=f-tl$, $t \in \bC$. Chose $t\neq 0$ with $\vert t \vert$ small enough such that $f_t\colon X \to \bC$ is a holomorphic Morse function on the regular locus $X_{\rm reg}$ of $X$. Denote as before by $\# \Sing(f_t|_{X_{\reg}})$ the number of Morse critical points of $f_t$ on $X_{\reg}$. With these notations, we prove the following.
\begin{theorem}\label{morsenumber}
The number of Morse critical points on $X_{\reg}$ in a generic deformation $f_t:=f-tl\colon X \to \bC$ of $f$ is given by:
\be\label{mnu}
\# \Sing(f_t|_{X_{\reg}})=m_\infty + (-1)^{\dim X-1} \sum_{c \in \bC}\sum_{\{V \in \mathscr{S}, V \subset \Sigma_c\}} \chi(V \setminus V \cap H_t) \cdot \mu_V
\ee
where $m_\infty $ is the number of points of $\Sing(f_t|_{X_{\reg}})$ that go to infinity as $t \to 0$,  the first sum is over the critical values $c$ of $f$, for each of which we set $\Sigma_c=f^{-1}(c) \cap  \Sing_\mathscr{S} f$, $\mu_V$ is as in \eqref{def}, and $H_t:=l^{-1}(t)$ is a generic hyperplane.
\end{theorem}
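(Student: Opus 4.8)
The strategy is to count the Morse points of $f_t$ on $X_{\reg}$ by combining the main result of \cite{MRW5} (Theorem \ref{thm_main}/formula \eqref{eq_main}) with the computation of $n_V$ from Theorem \ref{nvgen}, but reorganizing the sum so that the stratum-by-stratum contribution of $\Sing(l|_V)$ can be rewritten intrinsically. First I would recall that, up to the $m_\infty$ points escaping to infinity, equation \eqref{count} gives
\[
\# \Sing(f_t|_{X_{\reg}}) = m_\infty + \sum_{V \subset \Sing_{\mathscr S} f} n_V \cdot \#\Sing(l|_V).
\]
Because $l$ is general, for each stratum $V$ the function $l|_V$ is a Morse function on the smooth quasi-projective variety $V$ (which is locally closed in $\bC^N$), so $\#\Sing(l|_V)$ is the absolute value of the Euler characteristic with a sign: more precisely, by the standard Lefschetz-type count (general linear pencil on an affine/quasi-affine variety; see \cite{STV1}) one has
\[
\#\Sing(l|_V) = (-1)^{\dim V}\,\chi\!\left(V \setminus (V \cap H_t)\right),
\]
where $H_t = l^{-1}(t)$ is a generic hyperplane, since removing a generic hyperplane section from $V$ and then Morsifying $l$ accounts for the difference of Euler characteristics, with each Morse point on $V$ contributing $(-1)^{\dim V}$. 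This is the input I take from \cite{STV1}.

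Next I would substitute the formula \eqref{nvg} of Theorem \ref{nvgen} for $n_V$ and the expression above for $\#\Sing(l|_V)$ into the displayed sum, grouping by critical value $c$ and writing $\Sigma_c = f^{-1}(c)\cap \Sing_{\mathscr S} f$. This yields a double sum: one term $(-1)^{\codim V - 1}\mu_V$ and a correction $-(-1)^{\codim V -1}\sum_{S \supset V}\chi_c(\lk_{\overline S}(V))\,\mu_S$, each multiplied by $(-1)^{\dim V}\chi(V\setminus V\cap H_t)$. The key combinatorial step is to swap the order of summation in the correction term: fixing a stratum $S$ with $\mu_S$, collect the coefficient of $\mu_S$, which becomes a sum over strata $V \subset \overline S \setminus S$ of the "local" contributions $(-1)^{\dim V + \codim V - 1}\chi_c(\lk_{\overline S}(V))\,\chi(V\setminus V\cap H_t)$, together with the diagonal term $V = S$. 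I expect this regrouped coefficient of $\mu_S$ to collapse — via the multiplicativity of Euler characteristics and the standard stratified formula expressing $\chi(\overline S \setminus \overline S \cap H_t)$ (or rather $\chi(S\setminus S\cap H_t)$ corrected by links) in terms of a sum over smaller strata — to exactly $(-1)^{\dim X - 1}(-1)^{?}\chi(S\setminus S\cap H_t)$; reconciling the sign bookkeeping here (note $\dim V + \codim V = N$ is constant, which is what makes the signs uniform) is what produces the clean factor $(-1)^{\dim X -1}$ in \eqref{mnu}.

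The main obstacle, and where I would spend the most care, is precisely this sign-and-Euler-characteristic reconciliation: one needs the identity that, for a fixed stratum $S$ of the induced stratification of $\Sigma_c$, the alternating sum $\sum_{V \subset \overline S}(\text{link factor})\cdot \chi(V\setminus V\cap H_t)$ — with the diagonal convention $\lk_{\overline S}(S)$ interpreted as a point — equals (up to the uniform sign coming from $\dim V + \codim V = N$) the intrinsic quantity $\chi(S \setminus S\cap H_t)$ attached only to $S$. This is a consequence of the additivity of compactly supported Euler characteristic together with the local structure of Whitney stratified sets (the cone-like neighborhood description, giving $\chi$ of a stratified set as a sum over strata weighted by complex links), but assembling it so that the generic hyperplane $H_t$ is compatible with all the strata $V \subset \overline S$ simultaneously — which is legitimate since $H_t$ is generic — and then checking that no extra contributions survive, requires a careful inductive argument on $\dim S$. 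Once that identity is in hand, the theorem follows by substituting back and recognizing $\sum_{c}\sum_{V\subset \Sigma_c}(\cdots)\mu_V$ as the right-hand side of \eqref{mnu}.
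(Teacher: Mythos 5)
Your overall architecture (start from \eqref{count}, convert $\#\Sing(l|_V)$ into Euler--characteristic data, regroup by critical value) matches the paper's, but the identity you feed into it at the first substantive step is false, and the error does not wash out. You claim
\[
\#\Sing(l|_{V}) \;=\; (-1)^{\dim V}\,\chi\bigl(V\setminus V\cap H_t\bigr)
\]
for each stratum $V$. This holds when $V$ is a \emph{closed} smooth affine variety, but the strata $V\subset\Sigma_c$ are only locally closed, and for non-closed $V$ the formula fails. Concretely, take $V=\bC\setminus\{0\}$, e.g.\ the open stratum of a one-dimensional critical locus $\overline V\cong\bC$ whose origin is a smaller stratum: a generic linear $l$ has no critical points on $V$, so $\#\Sing(l|_V)=0$, whereas $(-1)^{\dim V}\chi(V\setminus V\cap H_t)=(-1)\cdot(0-1)=1$. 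Since $\sum_V n_V\cdot\#\Sing(l|_V)$ is a fixed number, replacing one factor by a wrong value at a stratum with $n_V\neq 0$ changes the total, and no regrouping of link terms afterwards can repair it. The correct input from \cite{STV1} is the global Euler obstruction formula
\[
\#\Sing(l|_{V}) \;=\; (-1)^{\dim V}\bigl[\chi(\overline V,\Eu_{\overline V})-\chi(\overline V\cap H_t,\Eu_{\overline V\cap H_t})\bigr],
\]
a \emph{weighted} Euler characteristic of the closure $\overline V$, not the plain Euler characteristic of the stratum itself.

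This also changes the second half of the argument. The paper does not invert the link matrix of Theorem \ref{nvgen} as you propose; instead, after summing $(-1)^{\dim V}n_V\bigl[\chi(\overline V,\Eu_{\overline V})-\chi(\overline V\cap H_t,\Eu_{\overline V\cap H_t})\bigr]$ over $V\subset\Sigma_c$, it recognizes the result as $(-1)^{\dim X-1}$ times the difference of the Euler characteristics of the two sides of the defining identity \eqref{fcfb}, $\Phi_{f-c}(\Eu_X)=\sum_V(-1)^{\codim V-1}n_V\,\Eu_{\overline V}$, and of its restriction to $H_t$. Evaluating $\chi(\Sigma_c,-)$ on the left-hand side and expanding stratumwise as $\sum_V\chi(V\setminus V\cap H_t)\,\mu_V$ gives \eqref{mnu} directly, with no complex-link combinatorics at all. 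So besides the incorrect starting identity, the inductive ``collapse'' of link terms that you defer to the end is both the hardest part of your plan and unnecessary: the change of basis from the $n_V$ to the $\mu_V$ is accomplished in one stroke by applying $\chi$ to \eqref{fcfb}, and that is the only place where the unweighted quantities $\chi(V\setminus V\cap H_t)$ legitimately enter.
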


\begin{proof} Assume for simplicity that $m_\infty =0$. Then we have by \eqref{count} that:
\be\label{count2} \# \Sing(f_t|_{X_{\reg}})=\sum_{V \subset \Sing_\mathscr{S} f}n_V \cdot \#\Sing(l|_{V}),\ee
with $n_V$ determined by formula \eqref{fcf} and Theorem \ref{nvgen}.

Next, using the fact that each $\overline{V}$ is affine and $H_t$ was chosen generically (hence transversal to the Whitney stratifications of $X$ and of $\overline{V}$), it follows from \cite[formula (2)]{STV1} that for each stratum $V$ appearing in \eqref{count2} we have:
\be\label{count3}
\#\Sing(l|_{V})=(-1)^{\dim V} \cdot \big[ \chi(\overline{V}, \Eu_{\overline{V}}) - \chi(\overline{V} \cap H_t, \Eu_{{\overline{V}}\cap H_t}) \big]. 
\ee
Note also that, by transversality, \be\label{restr}\Eu_{{\overline{V}}\cap H_t}=\Eu_{\overline{V}}\vert_{H_t}.\ee
It then follows from \eqref{count2} and \eqref{count3} that
\be\label{223}
\# \Sing(f_t|_{X_{\reg}})=\sum_{V \subset \Sing_\mathscr{S} f} (-1)^{\dim V} \cdot n_V \cdot 
 \big[ \chi(\overline{V}, \Eu_{\overline{V}}) - \chi(\overline{V} \cap H_t, \Eu_{{\overline{V}}\cap H_t}) \big]. 
\ee

For each critical value $c$ of $f$, applying the Euler characteristic over $\Sigma_c$ to formula \eqref{fcfb} yields
\be\label{fcfbf1}
\chi(\Sigma_c,\Phi_{f-c}(\Eu_X))=\sum_{V \subset \Sigma_c } (-1)^{\codim V-1}\cdot n_V \cdot \chi(\overline{V}, \Eu_{\overline{V}}),
\ee
On the other hand, by restricting \eqref{fcfb} to $H_t$, using \eqref{restr}, and applying the Euler characteristic over $\Sigma_c \cap H_t$, we get:
\be\label{fcfbf2}
\chi(\Sigma_c \cap H_t,\Phi_{f-c}(\Eu_X)\vert_{H_t})=\sum_{V \subset \Sigma_c } (-1)^{\codim V-1}\cdot n_V \cdot \chi(\overline{V}\cap H_t, \Eu_{\overline{V}\cap H_t}),
\ee

Combining \eqref{223}, \eqref{fcfbf1} and  \eqref{fcfbf2} yields that
\be\label{226}
\# \Sing(f_t|_{X_{\reg}})=(-1)^{\dim X-1} \sum_{c \in \bC}
\big[ \chi(\Sigma_c,\Phi_{f-c}(\Eu_X))-\chi(\Sigma_c \cap H_t,\Phi_{f-c}(\Eu_X)\vert_{H_t}) \big].
\ee
By transversality, strata of $\Sigma_c \cap H_t$ are of the form $V \cap H_t$, with $V \subset \Sigma_c$. So we can refine the stratification of $\Sigma_c$ with strata of the form $V \cap H_t$ and $V \setminus V \cap H_t$, for each $V \subset \Sigma_c$. The assertion follows now from \eqref{226} and the definition of the Euler characteristic of a constructible function (as a weighted sum over the strata of an adapted Whitney stratification).
\end{proof}

\section{The polar viewpoint on computing the local multiplicities}\label{s:polarcomp}

 We first show, in Section \ref{s:detect},  how the limit points of stratified Morse trajectories can be detected by the polar curve. Then, for functions with only isolated stratified singularities, we show in Section \ref{s:isoldecomp} how to compute the corresponding local multiplicities in terms of polar multiplicities.

\subsection{Detecting the limit points in a Morsification}\label{s:detect}

This section addresses the fully general setting of the problem of finding the trajectories 
of stratified Morse points and their asymptotic limits at the singular set of $f$. 
We will exploit the key observation that  the polar curve contains all the stratified Morse trajectories.

Let $X \subset \C^N$ be a closed irreducible affine variety and $f\colon X \to \bC$ the restriction to $X$ of a polynomial function. 
As in the Introduction, we endow $X$ with a Whitney stratification $\sW$ with finitely many strata, and let
$\Sing_{\sW}f:=\bigcup_{V\in \sW} \Sing f_{|V}$ be the stratified singular locus of $f$ with respect to $\mathscr{W}$. Let $l\colon \C^N \to \C$ be a general linear function. 
Then, for general $t \in \bC^*$, the function $f_t:=f-tl$ is a polynomial Morse function in the stratified sense on $X$, i.e., it has only non-degenerate isolated critical points on each positive-dimensional stratum of $X$.

 Consider the 
 map $(l,f) \colon X \to \C\times \C$ and its stratified singular locus $$\Sing_{\sW}(l,f) := \bigcup_{V\in \sW} \Sing (l,f)_{|V},$$ which is also a closed set due to the Whitney regularity of the stratification $\sW$.

\begin{definition}\label{d:polarcurve}
 For  a positive dimensional stratum $V\in \sW$,  we define the polar locus of the stratum $V$ by:
\[ \Gamma_{V} (l,f) :=  \overline{\Sing (l,f)_{|V} \setminus \Sing f_{|V}} \ \subset \overline{V},
 \]
where $\Sing (l,f)_{|V} := \{x\in V \mid \rank \Jac(l,f) <2 \}.$
 
 We call 
 \[ \Gamma_{\sW} (l,f) := \bigcup_{V\in \sW}\Gamma_{V} (l,f) 
 \]
 the {\it polar locus} of $f$ with respect to the stratification $\sW$ and to the linear function $l$. 
 \end{definition}
By definition,  we have that $\Gamma_{V} (l,f) = \emptyset$ for any $V\subset \Sing_{\sW} f$. Let us observe that all strata  $V\not\subset \Sing_{\sW}f$ of dimension 1 are included in the polar locus.

By  \cite{Ti-compo}, see also \cite[Polar Curve Theorem 7.1.2]{Tibar-book} or \cite[Lemma 2.5]{ST}, there is a Zariski-open subset $\Omega_{f}$ of the dual projective space
 $\check\bP^{N-1}$ such
that, for any $l\in \Omega_f$, the polar locus $\Gamma_{\sW} (l,f)$  is a pure dimensional curve or it is
empty.
We will consider a Zariski-open set $\Omega_{f}$ which has in addition the property that, for all $l\in \Omega_{f}$,
the singular set $\Sing_{\sW} l$  consists of only stratified Morse points.

The Morse critical points of the restriction of $f_t=f-tl$ to some stratum $V\in \mathscr{W}$ satisfy the equations of the polar curve $\Gamma_{V} (l,f)$. Let $\Gamma_{V, p} (l,f)$ and  $\Gamma_{\sW, p} (l,f)$ denote  the germs at $p$  of  $\Gamma_{V} (l,f)$  and of $\Gamma_{\sW} (l,f)$, respectively. 

The next result shows how to detect the points  $\Sing (l|_{V})$ which appear on the right-hand side of the limiting formula \eqref{eq_main} of Theorem \ref{thm_main}. 

\begin{proposition}\label{polarlinear}
Let $p\in V\in \sS$ such that  $V\subset \Sing_{\sW}f$ and $\dim V >0$. Let $l\in \Omega_{f}$.\\
If $p\in \Gamma_{\sW} (l,f) \cap V$ then $p\in \Sing (l_{|V})$. 
\end{proposition}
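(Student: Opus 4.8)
The plan is to analyze the situation near the point $p$ by passing to a transverse slice that reduces everything to the case of a $0$-dimensional stratum, and then to argue that a polar curve point lying on the critical stratum $V$ forces $l$ to be critical on $V$ there.

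First I would set up the transverse geometry. Since $\dim V > 0$ and $p \in V$, choose a general linear subspace $N_V \subset \bC^N$ of codimension $\dim V$ meeting $V$ transversally at $p$, so that $p$ is a $0$-dimensional stratum of the induced Whitney stratification of $X_V := X \cap N_V$. The key observation is that the polar locus is compatible with this slicing: a point of $\Gamma_{\sW}(l,f) \cap V$ near $p$ corresponds, under a generic choice of $N_V$, to the fact that the restricted map $(l,f)|_{X_V}$ degenerates. I would make precise that $p \in \Gamma_{\sW}(l,f)$ means $p$ is a limit of points $q \in V' \setminus \Sing_{\sW} f$ (for some stratum $V'$ with $V \subset \overline{V'}$, possibly $V' = $ an open stratum) at which $\rank \Jac(l,f)|_{V'} < 2$ but $q \notin \Sing f|_{V'}$; combined with $p \in V \subset \Sing_{\sW} f$, Whitney regularity (condition (b)) ensures the relevant limiting tangent/conormal directions along the trajectory approaching $p$ are controlled by the geometry of $\overline{V'}$ at $p$.

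Next, the heart of the argument: I would show that the existence of such a polar trajectory accumulating at $p \in V$ forces the differential of $l|_V$ to vanish at $p$. The intuition is that along the polar curve $\Gamma_{V'}(l,f)$, the differential $dl$ is proportional to $df$ (modulo the conormal of $V'$); as $q \to p$, Whitney (b)-regularity of the pair $(V, V')$ implies that limits of the tangent lines to $\Gamma_{V'}(l,f)$ lie in $T_p V$ and limits of the hyperplanes $\ker d(l|_{V'})$ contain $T_p V$. On the other hand, $p \in \Sing_{\sW} f$ means $df|_{T_p V} = 0$. Hence the limiting relation $dl \equiv \lambda\, df \pmod{\text{conormal of } V'}$ restricts on $T_p V$ to $dl|_{T_p V} = 0$, i.e. $p \in \Sing(l|_V)$. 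To make this rigorous I would either invoke the Polar Curve Theorem machinery of \cite{Tibar-book} (which gives precisely that $\overline{\Gamma_{V'}(l,f)}$ meets $\overline{V}$ in points where the relative polar data degenerates onto $V$), or argue directly with Whitney's condition (b) applied to the stratified map $(l,f)$, using that $l \in \Omega_f$ was chosen so that $\Sing_{\sW} l$ consists of stratified Morse points and the polar locus is a curve.

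I expect the main obstacle to be making the limiting differential argument fully rigorous — specifically, justifying that as the polar trajectory approaches $p$, the direction $dl$ (which off $\Sing f$ is a combination of $df$ and conormal directions of the bigger stratum $V'$) passes to a limit that annihilates $T_p V$. This requires careful use of Whitney (b)-regularity of the pair $(V, V')$ together with the fact that $df$ vanishes along $V$, and one must be careful about the possibility that $V'$ is itself contained in $\Sing_{\sW} f$ versus the generic stratum; the cleanest route is probably to cite \cite[Lemma 2.5]{ST} or the Polar Curve Theorem of \cite{Tibar-book} for the structural statement and then only verify the differential identity in the slice $X_V$, where $p$ is $0$-dimensional and the bookkeeping is minimal.
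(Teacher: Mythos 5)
Your overall strategy is the right one, and it is essentially the paper's argument read in the direct rather than the contrapositive direction: a point of $\Gamma_{\sW}(l,f)\cap V$ is a limit of points $q\in W$ (for some stratum $W\not\subset\Sing_{\sW}f$ with $V\subset\overline W$) at which $\d l$ lies in the relative conormal space of $f|_W$, i.e.\ $\d l=\lambda\,\d f_q+\xi$ with $\xi$ conormal to $W$ at $q$; one must show that every limit at $p$ of such covectors annihilates $T_pV$, which then forces $\d l|_{T_pV}=0$. The genuine gap is in how you justify this limiting statement. Whitney's condition (b) (or (a)) controls only the limits of the conormal spaces $T^*_{W,q}$, i.e.\ the $\xi$-part: it guarantees that limits of covectors conormal to $W$ vanish on $T_pV$. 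It says nothing about the limits of the rescaled differentials $\lambda_i\,\d f_{q_i}$, equivalently about the limits of the tangent spaces $\ker \d (f|_W)_{q_i}$ to the fibres of $f|_W$ along the polar trajectory. The fact that $\d f_p|_{T_pV}=0$ (because $V\subset\Sing_{\sW}f$) controls the value at $p$ only; since the scalars $\lambda_i$ may blow up, it does not control the limit of $\lambda_i\,\d f_{q_i}$. The statement you actually need --- that limits at $p\in V$ of tangent spaces to fibres of $f|_W$ contain $T_pV$ --- is precisely Thom's $(a_f)$-condition for the pair $(W,V)$. This is not a formal consequence of Whitney (b); it is the theorem of Brian\c{c}on--Maisonobe--Merle \cite{BMM} that Whitney stratifications are automatically $(a_f)$-regular along the strata of $\Sing_{\sW}f$ for a holomorphic $f$, and this is exactly the ingredient the paper's proof invokes. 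The fallback references you propose (\cite[Lemma 2.5]{ST} and the Polar Curve Theorem of \cite{Tibar-book}) concern the genericity of $l$ and the curve structure of $\Gamma_{\sW}(l,f)$, not this regularity statement, so they do not close the gap either.

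Two smaller points. The preliminary transverse slice $X_V=X\cap N_V$ is an unnecessary detour: the polar locus of the sliced situation is not simply $\Gamma_{\sW}(l,f)\cap N_V$, and nothing in the limiting argument is simplified by making $p$ a point stratum, so you would only be adding genericity bookkeeping. Once the $(a_f)$-condition is in hand, the cleanest formulation is the paper's contrapositive: if $\d l|_{T_pV}\neq 0$, then $\d l$ does not belong to the fibre over $p$ of the relative conormal space of $f|_W$ for any $W$ as above, hence $\Gamma_W(l,f)$ is empty near $p$ for every such $W$, i.e.\ $p\notin\Gamma_{\sW}(l,f)$.
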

\begin{proof}
 Let $W\in \sS$,  $W\not\subset \Sing_{\sW}f$, such that  $V\subset \overline{W}$. 
 As in \S\ref{sec:2}, let   $T^*_{\overline{W}} \bC^N$ be the conormal space of $\overline{W}$ in the cotangent bundle $T^*\bC^N$, and let $\pi : T^*_{\overline{W}} \bC^N \to \overline{W}$ denote the projection. 
 
 If $p\not\in \Sing (l_{|V})$ then  $\d l \not\in \pi^{-1}(p)$. On the other hand, it is well-known that the Whitney stratification $\sS$ is also Thom (a$_{f}$)-regular at all the strata of the singular locus $\Sing_{\sW}f$, cf  \cite{BMM}, see also \cite{Ti-compo} or \cite[Theorem A1.1.7]{Tibar-book}. This means that, for the pair of strata $(W,V)$ as above,  $\d f_{|W}$ is independent of $\d l_{|W}$  in the neighborhood of $p$.
In turn, this implies, by definition,  that the polar locus $\Gamma_{W} (l,f) $ is empty in the neighborhood of $p$. Since this holds for any stratum $W$ as above, our statement is proved.
\end{proof}

\subsection{Isolated singularities: number of Morse points and polar multiplicities}\label{s:isoldecomp}

We work here under the assumption that $f$  has only stratified isolated singularities on $X$ with respect to the stratification $\sW$. 
  
Recall formula \eqref{sta}, which in our setup of $\dim Sing_{\sW}f =0$ reads as:
$$\lim_{t\to 0}\Sing(f_t|_{X_{\reg}})=\sum_{P\in Sing_{\sW}f} n_P \cdot P,
$$
with each $n_P \in \mathbb{Z}_{\geq 0}$.

The main result of this section gives an expression of the multiplicities $n_P$, for each $P\in Sing_{\sW}f$,  in terms of local polar multiplicities.

\begin{theorem}\label{t:main1}
Let $X\subset \bC^{N}$ be any affine variety and let $f\colon X\to \bC$ be the restriction of a polynomial function on $\bC^{N}$ such that it has only  isolated singularities on $X$ with respect to the stratification $\sW$. Then, for any general linear function $l$,  we have the following equality {simultaneously} at any point $P \in \Sing_{\sW} f$:
\begin{equation}\label{eq:main1}
 n_P = \mult_{P}\left( \Gamma_{X_{\reg}}(l,f), f^{-1}(f(P)) - \mult_{P}(\Gamma_{X_{\reg}}(l,f), l^{-1}(l(P)\right),
  \end{equation}
  where the intersection multiplicities of \eqref{eq:main1} are considered in the ambient space $\bC^{N}$.
\end{theorem}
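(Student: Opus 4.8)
The plan is to localize the counting formula \eqref{sta}--\eqref{count} at a single isolated stratified singular point $P$ of $f$, and to reinterpret the integer $n_P$ as a count of Morse critical points of $f_t = f - tl$ on $X_{\reg}$ that converge to $P$ as $t \to 0$. By the discussion preceding the theorem (and the observation in Section~\ref{s:detect}), all such trajectories lie on the polar locus $\Gamma_{X_{\reg}}(l,f)$, which for a general $l \in \Omega_f$ is a reduced curve (or empty). So the first step is to show that $n_P$ equals the number, counted with multiplicity, of points of $\Gamma_{X_{\reg}}(l,f)$ near $P$ that map under $f_t$ to critical values accumulating at $f(P)$; equivalently, $n_P$ is the number of branches of $\Gamma_{X_{\reg}}(l,f)$ at $P$ weighted by how many critical points of $f_t$ each branch carries for small $t \neq 0$. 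This uses Example~\ref{str_iso} together with Theorem~\ref{thm_main} in the isolated-singularity case.

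The second and main step is to convert this count into the difference of intersection multiplicities on the right-hand side of \eqref{eq:main1}. The key geometric fact is that a point $x(t)$ on a branch of $\Gamma_{X_{\reg}}(l,f)$ near $P$ is a critical point of $f_t|_{X_{\reg}}$ precisely when $f(x(t)) - t\, l(x(t))$ is stationary along the branch, i.e.\ when $\d(f - tl)$ vanishes on the (one-dimensional) tangent to $\Gamma_{X_{\reg}}(l,f)$; parametrizing each branch and using that on the polar curve $\d f$ and $\d l$ are proportional, this becomes the condition $f(x(t)) = t\, l(x(t)) + \text{const}$ restricted to the branch. Counting solutions $x(t) \to P$ as $t \to 0$ is then a standard Puiseux/Newton-polygon computation on each branch: the number of such solutions on a given branch is the difference between the order of vanishing of $f - f(P)$ and the order of vanishing of $l - l(P)$ along that branch. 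Summing over all branches of $\Gamma_{X_{\reg}}(l,f)$ at $P$ yields exactly $\mult_P(\Gamma_{X_{\reg}}(l,f), f^{-1}(f(P))) - \mult_P(\Gamma_{X_{\reg}}(l,f), l^{-1}(l(P)))$, since the intersection multiplicity of the polar curve with the fiber $\{f = f(P)\}$ (resp.\ $\{l = l(P)\}$) is the sum over branches of these orders of vanishing. One must check that both intersection multiplicities are finite, i.e.\ that no branch of $\Gamma_{X_{\reg}}(l,f)$ at $P$ lies inside $f^{-1}(f(P))$ or inside $l^{-1}(l(P))$; the first follows because such a branch would force critical points of $f|_{X_{\reg}}$ itself near $P$ (contradicting that $P$ is an isolated stratified singularity and that trajectories off the polar curve do not occur), and the second follows from the genericity of $l$ in $\Omega_f$, which also guarantees $l$ is not constant on any polar branch.

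The third step is a bookkeeping check that the "points escaping to infinity" issue does not interfere: since we localize everything in a small ball $B_\epsilon(P)$ and work with $\vert t\vert$ small, the trajectories we count are exactly those contributing to $n_P$ in \eqref{sta}, independently of the global behavior, so the argument is purely local at $P$ and applies simultaneously at every $P \in \Sing_{\sW} f$.

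The step I expect to be the main obstacle is the second one: making rigorous the claim that the number of Morse critical points of $f_t$ flowing into $P$ along $\Gamma_{X_{\reg}}(l,f)$ is computed branch-by-branch by the stated difference of vanishing orders. The subtlety is that $\Gamma_{X_{\reg}}(l,f)$ need not be smooth or reduced a priori, and the restriction of $f_t$ to the polar curve near $P$ must be analyzed carefully — one needs that the critical points of $f_t|_{X_{\reg}}$ near $P$ are precisely the critical points of $f_t$ restricted to $\Gamma_{X_{\reg}}(l,f)$ (a standard consequence of the definition of the polar locus, but requiring that $\Gamma_{X_{\reg}}(l,f)$ is genuinely one-dimensional and that $f_t$ has no critical points on $X_{\reg}$ off the polar curve near $P$), and then that counting these via intersection theory on the curve agrees with the ambient intersection multiplicities $\mult_P(\cdot,\cdot)$ appearing in \eqref{eq:main1}. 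I would handle this by invoking the Polar Curve Theorem from \cite{Ti-compo} (see \cite[Theorem~7.1.2]{Tibar-book}) to guarantee the dimension and genericity properties, parametrizing the reduced branches by Puiseux series, and reducing to the elementary fact that for a holomorphic germ $h(s)$ with $h(0) \neq 0$ and $\ord_s g = k$, the equation $g(s) = t\, h(s)$ has exactly $k$ solutions $s \to 0$ as $t \to 0$.
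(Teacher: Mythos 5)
Your proposal is correct in outline and follows the same two-step skeleton as the paper's proof: first identify $n_P$ with the number of Morse points of $f_t|_{X_{\reg}}$ that converge to $P$ as $t\to 0$, then express that local count as the difference of polar intersection numbers in \eqref{eq:main1}. The two arguments diverge in how each step is executed. For the first step you invoke Theorem \ref{thm_main} (via formula \eqref{sta}) together with Definition \ref{def21}, which is arguably the most direct route; the paper instead deduces the same identification by combining \eqref{niis} and \eqref{def3} with \cite[Prop.~2.3 and p.~281]{STV2}. For the second, decisive step the paper simply cites Massey's local formula \cite[Thm.~3.2]{Massey2} (with a footnote pointing to \cite{MT2} for a direct proof), whereas you sketch a self-contained Puiseux-series argument on the branches of $\Gamma_{X_{\reg}}(l,f)$; this is essentially the content of \cite{MT2}, so your route buys an elementary, self-contained proof at the price of redoing the local analysis the paper delegates to \cite{Massey2}. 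Three points in your sketch need tightening to become a proof: (i) the criticality condition along a branch $\gamma$ is the derivative condition $(f\circ\gamma)'(s)=t\,(l\circ\gamma)'(s)$, not ``$f=tl+\mathrm{const}$'' as written --- the order count is unaffected since differentiation lowers each order by exactly one; (ii) your ``elementary fact'' assumes $h(0)\neq 0$, but a polar branch through a singular point of $X$ need not be smooth, so $(l\circ\gamma)'$ vanishes to order $m-1$ with $m$ the branch multiplicity, and you need the version where $\ord_s g=k$, $\ord_s h=j\le k$ gives $k-j$ solutions tending to $0$; (iii) you must verify $\ord_s(f\circ\gamma-f(P))\ge \ord_s(l\circ\gamma-l(P))$ on every branch, so that the solution count is the signed difference rather than its positive part --- this is automatic for general $l$ because a generic linear form realizes the minimal possible order, namely the branch multiplicity, among all functions vanishing at $P$. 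With these repairs your argument goes through and is a legitimate alternative to citing \cite{Massey2}.
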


 We sketch below a proof of Theorem \ref{t:main1}, starting by defining the genericity of $l$.

\begin{proof}[Proof of Theorem \ref{t:main1}]
First of all let us remark that one may choose an $l \in \Omega_{f}$ in the intersection of a finite number of Zariski-open subsets of linear forms such that $l$ is locally generic at each point of the finite set $\Sing_{\sW}f$. This will be our globally generic $l$.

Secondly, by combining formulas \eqref{niis} and \eqref{def3} with the result of \cite[Proposition 2.3 and p.281]{STV2}, we deduce that for each isolated stratified singularity $P \in \Sing_{\sW} f$, the multiplicity $n_P$ coincides with the number of Morse critical points on $X_{\reg}$ of a generic deformation $f_t=f-tl$ of $f$ which converge to $P$ as $t\to 0$. 
The local formula 
\begin{equation}\label{eq:masseyformula}
\begin{array}{l}
 \# \{ q(t)\in \Sing(f_t|_{X_{\reg}}) \mid q(t)\to P \mbox{ as } t\to 0 \} =  \\  \ \ \ \  \ \ \ \  \ \ \ \  \ \ \ \ 
  \mult_{P}\left( \Gamma_{X_{\reg}}(l,f), f^{-1}(f(P)\right) - \mult_{P}\left( \Gamma_{X_{\reg}}(l,f), l^{-1}(l(P)\right)
  \end{array}
\end{equation}
 has been proved by Massey in \cite[Theorem 3.2]{Massey2}\footnote{For a direct and radically shorter proof of this result, see our upcoming paper \cite{MT2}.}, based on ideas previously developed in works of Siersma \cite{Si-bouquet} and Tib\u{a}r \cite{Ti-bouquet}.

Then for any point $P \in \Sing_{\sW}f$, the equality \eqref{eq:main1} follows from \eqref{eq:masseyformula}.
\end{proof}

As stated after \eqref{fcfi},  the local index $n_P$ is non-negative. The next result describes when $n_P$ is positive.

\begin{proposition}
Let $f$ be a polynomial on $\bC^{N}$ such that its order at $P$ is $\ge 2$. Then $n_P>0$.
\end{proposition}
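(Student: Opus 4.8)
The plan is to use the polar-multiplicity formula \eqref{eq:main1} of Theorem~\ref{t:main1} and estimate each of the two intersection multiplicities at $P$ separately. First I would normalize coordinates so that $P$ is the origin of $\bC^N$, and recall that $\Gamma := \Gamma_{X_{\reg}}(l,f)$ is either empty or a pure one-dimensional curve germ at $P$ (by the Polar Curve Theorem cited before Proposition~\ref{polarlinear}). I want to show that $\mult_P(\Gamma, f^{-1}(f(P))) > \mult_P(\Gamma, l^{-1}(l(P)))$. Since $l$ is a general linear form, $l - l(P)$ vanishes to order exactly $1$ along any branch of $\Gamma$ through $P$, so $\mult_P(\Gamma, l^{-1}(l(P)))$ equals the total multiplicity $\mult_P(\Gamma)$ of the curve germ itself, i.e. $\sum_{\gamma} \mult_P(\gamma)$ over the branches $\gamma$ of $\Gamma$ at $P$. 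On the other hand, $\mult_P(\Gamma, f^{-1}(f(P))) = \sum_\gamma \ord_\gamma(f - f(P))$, where $\ord_\gamma$ denotes the vanishing order of $f - f(P)$ restricted to the branch $\gamma$. So it suffices to prove that for each branch $\gamma$ of $\Gamma$ at $P$ we have $\ord_\gamma(f - f(P)) \geq 2\,\mult_P(\gamma)$, and that $\Gamma$ is nonempty at $P$ (so that the sum is over at least one branch).

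For the branch-by-branch estimate, I would parametrize a branch $\gamma$ by a local analytic arc $x(s)$ with $x(0) = P$, chosen so that $\mult_P(\gamma) = \ord_s x(s)$ (the minimal order of the coordinate functions). Because $\ord_P f \geq 2$, the Taylor expansion of $f - f(P)$ at $P$ has no linear term, so $f(x(s)) - f(P)$ is a sum of monomials in the components of $x(s) - P$ each of degree $\geq 2$; substituting $x(s) - P = O(s^{\mult_P(\gamma)})$ componentwise gives $f(x(s)) - f(P) = O(s^{2\mult_P(\gamma)})$, i.e. $\ord_\gamma(f - f(P)) \geq 2\,\mult_P(\gamma)$, with room to spare relative to $\mult_P(\gamma)$ as soon as $\mult_P(\gamma) \geq 1$, which it always is. This yields the desired strict inequality once we know $\Gamma \neq \emptyset$ near $P$.

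The main obstacle is exactly the nonemptiness of the polar curve germ $\Gamma_{X_{\reg}}(l,f)$ at $P$: if $\Gamma$ were empty at $P$, both intersection multiplicities would be $0$ and the formula would only give $n_P = 0$. I would argue that $P$ lies on $\Gamma$ precisely because $P$ is a genuine stratified singular point of $f$ in the fiber $f^{-1}(f(P))$ and $l$ is generic: by \eqref{eq:masseyformula} the right-hand side counts the Morse points of $f_t|_{X_{\reg}}$ converging to $P$, and if no polar branch passed through $P$ there would be no such Morse trajectory, forcing $n_P = 0$; conversely the hypothesis $\ord_P f \geq 2$ together with $P \in \Sing_{\sW} f$ should guarantee that the generic linear perturbation does produce at least one vanishing critical point near $P$ (one can see this by a local degree/index argument, or by observing that the Jacobian-type equations cutting out $\Gamma$ have a solution accumulating to $P$ since $\d f$ vanishes appropriately there). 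Making this last point fully rigorous — rather than merely plausible — is where the real work lies; once $\Gamma \neq \emptyset$ at $P$ is secured, the inequality above finishes the proof and in fact shows $n_P \geq \sum_\gamma \mult_P(\gamma) \geq 1$.
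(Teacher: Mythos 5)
Your overall strategy --- reduce to the polar--multiplicity formula \eqref{eq:main1} and show that the first intersection number strictly exceeds the second --- is the same as the paper's, but the step you flag as ``where the real work lies'' is a genuine gap, and it is precisely the step the paper must (and does) supply: the nonemptiness of the polar curve germ $\Gamma_{X_{\reg}}(l,f)$ at $P$. The paper disposes of it in one line: $\ord_P f\ge 2$ gives $f\in m_{X,P}^2$, and for such $f$ the generic polar locus necessarily contains $P$; being purely one--dimensional, it is then a nonempty curve germ there. This is quoted from \cite[Lemma 2.5]{ST}. Your heuristics (a local degree/index argument, or ``the Jacobian-type equations have a solution accumulating to $P$'') point in the right direction but are not a proof, and the first half of your nonemptiness paragraph, via \eqref{eq:masseyformula}, is circular: it only restates that $\Gamma=\emptyset$ would force $n_P=0$, which is the conclusion you are trying to rule out.

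For the strict inequality itself you take a genuinely different and more elementary route. The paper pushes $\Gamma_{X_{\reg}}(l,f)$ forward under the finite map $(l,f)$ to the Cerf diagram $\Delta\subset\bC^2$ and invokes the classical fact that every irreducible component of $\Delta$ is tangent to the same coordinate axis (\cite{Ti-lefnumber}, \cite[Corollary 3.6]{Ti-thesis}), which forces the difference of the two intersection numbers at $(l,f)(P)$ to be positive. Your branch-by-branch estimate $\ord_\gamma(f-f(P))\ge 2\,\mult_P(\gamma)$ is correct and uses the hypothesis $\ord_P f\ge 2$ transparently, but it yields the strict inequality on a branch $\gamma$ only if $\ord_\gamma(l-l(P))=\mult_P(\gamma)$, i.e., only if the general $l$ is not tangent to the branches of its \emph{own} polar curve $\Gamma_{X_{\reg}}(l,f)$ at $P$. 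Since $\Gamma$ depends on $l$, this transversality is not a tautology about general linear forms; it is a standard genericity property of polar curves, but it must be invoked explicitly --- it is, in effect, exactly what the Cerf-diagram tangency statement encodes. With the citation for nonemptiness and the transversality of $l$ to $\Gamma$ added, your argument closes into a complete alternative proof; as written, both of these inputs are missing.
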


\begin{proof} The condition $\ord_{P} f \ge 2$ implies that $f\in m_{X,P}^{2}$, where  $m$ denotes the maximal ideal of the local ring $\cO_{X,P}$. Then the generic polar locus $\Gamma_{X_{\reg}}(l,f)$ contains the point $P$ and since it has dimension
1, is it a non-empty curve, see e.g. \cite[Lemma 2.5]{ST}. On the other hand the restriction map $(l,f)_{|} : \Gamma_{X_{\reg}}(l,f) \to \bC^{2}$  is one-to-one, hence the image is a curve $\Delta$ (called \emph{Cerf diagram}, or \emph{discriminant}) which has the following property: each irreducible component is tangent to the axis $x=0$, cf  \cite{Ti-lefnumber}, \cite[Corollary 3.6]{Ti-thesis}. This tangency yields a positive difference between the local intersection multiplicities at the point $q:=(l,f)(P)\in \bC^{2}$, namely  $$\mult _{q}(\Delta, \{ x=0\}) -  \mult_{q} (\Delta, \{ y=0\}) >0,$$
 which  lifts, by the one-to-one correspondence, to
\[ \mult_{P}\left( \Gamma_{X_{\reg}}(l,f), f^{-1}(f(P)) - \mult_{P}(\Gamma_{X_{\reg}}(l,f), l^{-1}(l(P)\right) >0,\]
thus we get the desired positivity in \eqref{eq:main1}.
\end{proof}

For computational purposes, let us note that as a consequence of Corollary \ref{cor:ED}, Remark \ref{r25} and Theorem \ref{t:main1}, we get the following expression for the ED degree as a linear combination of polar numbers. 

\begin{corollary}\label{cor:EDiso}
Fix a data point $(u_1,\dots,u_N)\in\mathbb{C}^N$ and an algebraic variety $X\subset\mathbb{C}^N$ with a Whitney stratification $\mathscr W$.
Assume that $f=\sum_{i=1}^N(x_i-u_i)^2$ has only isolated stratified singularities on $X$, and fix a general linear function $l$. If no points of $\Sing(f-tl|_{X_{\reg}})$ escape to infinity as $t\to 0$, 
then the Euclidean distance degree of $X$ is given by
$${\rm EDdeg}(X)=\sum_{P \subset \Sing_\mathscr{W} f}
\mult_{P}\left( \Gamma_{X_{\reg}}(l,f), f^{-1}(f(P)) - \mult_{P}(\Gamma_{X_{\reg}}(l,f), l^{-1}(l(P)\right).$$
\end{corollary}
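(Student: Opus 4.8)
The plan is to assemble Corollary~\ref{cor:EDiso} by simple composition of results already established in the excerpt, so essentially no new mathematical content is needed; the only care required is in checking that the hypotheses of each ingredient are met. First I would invoke Corollary~\ref{cor:ED}: under the standing assumption that $f=\sum_{i=1}^N(x_i-u_i)^2$ is the squared distance function for the arbitrary data point $(u_1,\dots,u_N)$, and that no critical points of $f-tl|_{X_{\reg}}$ escape to infinity as $t\to 0$, the Euclidean distance degree is
\[
{\rm EDdeg}(X)=\sum_{V \subset \Sing_\mathscr{S} f}n_{V}\cdot \#\Sing(l|_{V}).
\]

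Next I would specialize to the isolated-singularities setting. Since $f$ is assumed to have only isolated stratified singularities $P_1,\dots,P_r$ on $X$ with respect to $\mathscr W$, the stratified singular locus $\Sing_{\mathscr S}f=\Sing_{\mathscr W}f$ consists of the finitely many points $P_i$, each a zero-dimensional stratum. For a zero-dimensional stratum $V=\{P\}$ one has $\#\Sing(l|_{V})=1$ trivially (the restriction of $l$ to a point is critical there), so the sum collapses and we obtain precisely formula~\eqref{ediso} of Remark~\ref{r25}, namely ${\rm EDdeg}(X)=\sum_{i=1}^r n_{P_i}$. Here I should note that refining $\mathscr W$ to $\mathscr S$ is harmless: in the isolated case no positive-dimensional strata lie in the critical fibers, so $\Sing_{\mathscr S}f$ and $\Sing_{\mathscr W}f$ agree as sets and carry the same point-multiplicities $n_P$.

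Finally I would substitute the polar-multiplicity expression for each $n_P$ supplied by Theorem~\ref{t:main1}: for the chosen general linear function $l$, at every point $P\in\Sing_{\mathscr W}f$,
\[
n_P = \mult_{P}\!\left( \Gamma_{X_{\reg}}(l,f), f^{-1}(f(P)) \right) - \mult_{P}\!\left( \Gamma_{X_{\reg}}(l,f), l^{-1}(l(P)) \right).
\]
Inserting this into ${\rm EDdeg}(X)=\sum_{P\in\Sing_{\mathscr W}f} n_P$ yields the asserted formula. The one point deserving attention — and the only place a genuine obstacle could hide — is the compatibility of the genericity conditions on $l$: Corollary~\ref{cor:ED} needs $l$ general enough that $f_t$ is Morse on $X_{\reg}$ and that \eqref{count} holds, while Theorem~\ref{t:main1} needs $l\in\Omega_f$ chosen in the finite intersection of Zariski-open sets making it locally generic at each $P_i$. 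Since both requirements are satisfied on nonempty Zariski-open subsets of $\check\bP^{N-1}$, their intersection is again nonempty Zariski-open, so a single $l$ works for both statements at once; this is exactly the "globally generic $l$" constructed at the start of the proof of Theorem~\ref{t:main1}. With that observation the corollary follows immediately.
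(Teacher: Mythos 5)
Your proposal is correct and follows essentially the same route as the paper, which derives Corollary~\ref{cor:EDiso} precisely as the composition of Corollary~\ref{cor:ED}, Remark~\ref{r25} (collapsing the sum since $\#\Sing(l|_{\{P\}})=1$ for point strata), and Theorem~\ref{t:main1}. Your added remark on intersecting the two Zariski-open genericity conditions on $l$ is a sensible (and harmless) piece of extra care that the paper leaves implicit.
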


\section{Examples}\label{s:ex}   

\bex \label{e:morse}
 The following example was considered in \cite{STV2} by making use of formula \eqref{def3}.
It is interesting to  inspect it here in view of Theorem \ref{t:main1}.  Let $X := \{ x^2 - y^2 = 0\} \times \bC \subset \bC^3$ and let $f$ be the restriction to $X$ of the function $(x,y,z) \mapsto x+ 2y + z^2$.   We have here a space of dimension $2$ which has two strata
 in the coarsest Whitney stratification $\sW$, namely: $V := \Sing X$ is the $z$-axis, and $S := X_{\reg} = X\setminus \{ x=y=0\}$. By a quick computation we see that $\Sing_{\sW} f$ is the origin point $O :=(0,0,0)$.

We choose as general linear function the projection $(x,y,z) \mapsto z$, and its restriction to $X$.  By another elementary computation, or by employing the useful Proposition \ref{polarlinear}, the polar locus $\Gamma_{S}(l,f)$ appears to be empty. Then \eqref{eq:main1} yields that $n_O= 0$. 

\eex 
 
\bex \label{e:morse2}
 In order to see the effectivity of our new method described by Theorem \ref{t:main1}, we revisit here Example 5.5 from \cite{MRW5}, where it was checked with the help of a computer.  
So let $X := \{ (x^{2} + y^{2} +x)^{2}  = x^{2}+y^{2} \} \subset \bC^{2}$ be a cardioid, and let $f :\bC^{2}\to \bC$, $(x,y)\mapsto x^{2}+y^{2}$, be the complexification of the square of the distance function with respect to the origin. We also consider a general linear function $l$.
The space $X$ is an irreducible curve with a singular point at the origin $O=(0,0)\in \bC^{2}$. The function $f$ has two critical points, the origin $O$ and $P=(-2,0)$. The Whitney stratification $\cW$ of $X$ consists of the  point $O$ and the complement $S= X_{\reg} :=X\setminus O$. It  follows, by definition and because $\dim X = 1$, that the polar curve $\Gamma_{S}(l,f)$ is $X$ itself. 

We  have $\mult_{O}(\Gamma_{S}(l,f), \{f=0\}) = \mult_{O}(X, \{f=0\})$, and we observe that the plane curves 
$X$ and $\{f=0\}$ intersect transversely at $O$  in the sense that their tangent cones are different. Therefore:
$$\mult_{O}(X, \{f=0\}) = \mult_{O}X \cdot  \mult_{0}\{f=0\} = 2\cdot 2 = 4.$$

By the genericity of $l$ we have $\mult_{O}(X, \{l=0\})  = \mult_{O}X =2$. By applying \eqref{eq:main1} we then get:
 \[
 n_O = \mult_{O}(\Gamma_{S}(l,f), \{f=0\}) - \mult_{O}(\Gamma_{S}(l,f), \{l=0\}) = 4-2=2,
 \]
which means that two Morse points of the function $f_{t}:= f - tl$ on $X_{\reg}$ converge to the origin as $t\to 0$. 

The computation is similar at the point $P$ and we quickly get:
\[ \mult_{P}(X, \{f=0\}) = \mult_{P}X \cdot  \mult_{p}\{f=0\} = 1\cdot 2 = 2.\]
The intersection at $P$ of $X$ with the general slice $l=$ constant is smoothly transversal, thus the corresponding intersection multiplicity is 1. By \eqref{eq:main1} we then get:
\[ n_P = \mult_{P}(\Gamma_{S}(l,f), \{f=0\}) - \mult_{P}(\Gamma_{S}(l,f), \{l=0\}) = 2-1=1,
\]
which means that only one Morse point of the function $f_{t}$ on $X_{\reg}$ converges to $P$ when $t\to 0$. 

We note that $f_t$ has $3$ Morse critical points on $X_{\reg}$  for any general $t$, and no Morse point escapes to infinity as $t\to 0$. In view of Corollary \ref{cor:ED} and of formula \eqref{eq:main1}, the above calculation implies that ${\rm EDdeg}(X)=3$, which confirms in this new manner a well-known result, e.g., see \cite[Example 1.1]{DHOST}.
\eex


\begin{ex}\label{str_isos}
Based on formula \eqref{niis}, we perform an explicit calculation of the multiplicity $n_V$ in the special situation when both $X$ and  $f$ have an isolated singularity at the origin. Let $n=\dim X$ and let us assume $f(0)=0$. 
Formula \eqref{fcf} becomes 
\[
(-1)^{n-1}  \Phi_{f}(\Eu_X)=n_{\{0\}} \cdot \Eu_{\{0\}}, 
\]
from which one gets
\be\label{m1}
n_{\{0\}}=(-1)^{n-1}\Phi_{f}(\Eu_X)(0).
\ee
Let $F_{f,0}$ denote the Milnor fiber of $f$ at $0$. By results of Siersma \cite{Si-bouquet}, the reduced integral homology of $F_{f,0}$ has a decomposition 
\be\label{m2}
\widetilde{H}_*(F_{f,0}) \cong  \widetilde{H}_*(\lk_X(\{0\})) \oplus \widetilde{H}_*(S^{n-1})^{\oplus{k}}, 
\ee
where $\lk_X(\{0\})$ is the complex link of the origin in $X$, and $k$ is the number of times the summand 
$\widetilde{H}_*(S^{n-1})$ appears in the decomposition. 
We use \eqref{m1} to show that $$n_{\{0\}}=k.$$ 
Let $\alpha:=\Eu_X - 1_X$, so $\alpha$ is a constructible function on $X$ supported at the origin $0$. We rewrite \eqref{m1} as:
\be\label{m3}
n_{\{0\}}=(-1)^{n-1}\left\{ \Phi_{f}(1_X)(0) + \Phi_{f}(\alpha)(0) \right\},
\ee
and we have that 
\be\label{m4} \Phi_{f}(1_X)(0)=\chi(\widetilde{H}^*(F_{f,0}))=\chi(\widetilde{H}^*(\lk_X(\{0\})) + (-1)^{n-1} \cdot k,
\ee
the last equality being a consequence of \eqref{m2}. To show that $n_{\{0\}}=k$, it suffices from \eqref{m3} and \eqref{m4} to show that 
\be\label{m5}
\Phi_{f}(\alpha)(0)=-\chi(\widetilde{H}^*(\lk_X(\{0\})).
\ee
For this, we use the following identity of constructible functions, with $\alpha$ a constructible function on $X$: \be\label{van} \Phi_{f}(\alpha)=\Psi_f(\alpha) -\alpha\vert_{f=0},\ee
and recall that the nearby cycle functor $\Psi_f$ depends only on the values of $\alpha$ outside the set $\{f=0\}$. Since in our case $\alpha$ is supported only at the origin and $f(0)=0$, we get that $\Psi_f(\alpha)=0$, and hence $\Phi_{f}(\alpha)= -\alpha\vert_{f=0}$. In particular, 
\be\label{m6}
\Phi_{f}(\alpha)(0)=-\alpha(0).
\ee
Next, to compute $\alpha(0)$, we stratify $X$ with strata $X\setminus \{0\}$ and $\{0\}$, and  express $1_{X\setminus \{0\}}$ in terms of basis $\{\Eu_X, \Eu_{\{0\}}=1_{\{0\}}\}$ of constructible functions on $X$, to get  
\be\label{m7}
1_{X\setminus \{0\}}= \Eu_X -\chi(\lk_X(\{0\})) \cdot 1_{\{0\}},
\ee
or, equivalently,
\be\label{m8}\alpha:=\Eu_X - 1_X=\left(\chi(\lk_X(\{0\})) -1\right) \cdot 1_{\{0\}}.\ee
The desired identity \eqref{m5} follows by combining \eqref{m6} and \eqref{m8}.
\end{ex}


\end{document}